\theoremstyle{plain}
\newtheorem{theorem}{Theorem}[section]
\newtheorem{lemma}[theorem]{Lemma}
\newtheorem{corollary}[theorem]{Corollary}
\theoremstyle{definition}
\theoremstyle{remark}
\pgfplotsset{compat=1.18}
\tikzset{standard/.style={matrix of nodes,left delimiter={(},right delimiter={)},inner sep=0pt,nodes={inner sep=0.3em}}}
\tikzset{submatrix/.style = {rectangle, rounded corners,
  fill=yellow, draw, inner sep=0pt}}
\tikzset{fontscale/.style = {font=\relsize{#1}}}
\newcommand{\tikzAngleOfLine}{\tikz@AngleOfLine}
\def\tikz@AngleOfLine(#1)(#2)#3{%
  \pgfmathanglebetweenpoints{\pgfpointanchor{#1}{center}}{\pgfpointanchor{#2}{center}}
  \pgfmathsetmacro{#3}{\pgfmathresult}%
}
\tikzset{
    vector/.style = {thick,> = stealth',},
    vector_thick/.style = {very thick,> = stealth',},
    vector_thin/.style = {thin,> = stealth',},        
    axis/.style = {very thin,> = stealth',},
    axis_thick/.style = {thick,> = stealth',},    
}
\tikzstyle{dim}    = [latex-latex]
\begin{document}
\begin{frontmatter}
\title{Constructive Proofs of Generalized Boole--Fr\'{e}chet Bounds: A Dynamic Programming Approach}

\begin{aug}
\author{\fnms{Kizito}~\snm{Salako}\ead[label=e1]{k.o.salako@citystgeorges.ac.uk}\orcid{0000-0003-0394-7833}}
\address{The Centre for Software Reliability, Department of Computer Science, \\City St. George's, University of London, \\Northampton square, EC1V 0HB, The United Kingdom\\\printead{e1}}

\end{aug}

\begin{abstract}
Extensions of the Boole--Fr\'{e}chet inequalities give sharp bounds for the probabilities of compound events, particularly when only the probabilities of atomic events (that make up the compound events) are known. We present a constructive approach to obtaining generalized Boole--Fr\'{e}chet bounds using dynamic programming. 
\end{abstract}

\begin{keyword}[class=MSC]
\kwd[Primary ]{60E15}
\kwd[; Secondary ]{60-08}
\end{keyword}

\begin{keyword}
\kwd{Boole--Fr\'{e}chet inequalities}
\kwd{probability bounds}
\kwd{probability of k--out--of--n events}
\kwd{dynamic programming}
\end{keyword}

\end{frontmatter}

\maketitle

\section{Introduction. }
\label{sec_intro}
The Boole--Fr\'{e}chet inequalities~---~introduced by Boole~\cite{Boole_2009} and Fréchet~\cite{Frechet_1935}~---~provide sharp (best-possible) bounds for the probabilities of finite unions and intersections given only the events’ marginal probabilities. Using linear programming, Hailperin \cite{Hailperin_1965} generalized these inequalities significantly, to bound probabilities of \emph{any} Boolean function of finitely many atomic events.  
Several extensions of the original Boole--Fr\'{e}chet inequalities have been studied. Examples include early works by Bonferroni \cite{Bonferroni_1937}, Chung et al. \cite{ChungErdos_1952}, Gallot \cite{Gallot_1966}, Dawson et al. \cite{1967_Dawson}, and Kounias \cite{Kounias_1968}, which bound probabilities of conjunctions and disjunctions of atomic events, where (sums of) the probabilities of these atomic events, and (sums of) the probabilities of pairs of these atomic events, are known. Refinements to these bounds have been derived; e.g. see Kounias et al. \cite{Kounias_1976}, while Hunter uses a graph-theoretic algorithm to compute the bounds \cite{Hunter_1976}. These bounds, and more, have been derived under various constraints~---~e.g. knowledge of more \emph{binomial moments}, or statistical independence between atomic events~---~see \cite{Galambos_1996,Prekopa_2005,Bukszar_2012,Boros_2014}. Puccetti et al.'s rearrangement algorithm numerically approximates generalized Boole--Fr\'{e}chet bounds \cite{PuccettiRueschendorf2012}.

We develop a dynamic programming framework for constructive proofs of generalized Boole-Fr\'{e}chet bounds. An equivalence with Hailperin’s linear programming formulation shows the bounds are fixed--point values of related Bellman operators. In particular, for ``$k$--out--of--$n$'' events, we obtain closed--form expressions for the bounds, explicit characterizations of the extremal distributions that attain the bounds, and a stopping rule that can be used in efficient search algorithms for the bounds. The outline of the paper is: Section~\ref{sec_prelim} contains preliminary definitions and concepts, the constructive approach is detailed in Section~\ref{sec_bounds}, with final remarks and discussion in Section~\ref{sec_discussion}.    


\section{Preliminaries. }
\label{sec_prelim}
Consider events $A_1,\ldots,A_n$ and probabilities $0<p_1<\ldots<p_n<1$, such that $\mathbb{P}(A_i)=p_i$ for some unknown probability distribution $\mathbb{P}$. Let $\boldsymbol I$ be the set of all $n$-length binary words. For $I\in\boldsymbol I$, $A_{I}$ is the intersection of $n$ events from $\{A_1, A_1^c,\ldots,A_n, A_n^c \}$ that satisfies $A_I\subseteq A_i$ if the $i$-th component of $I$ is $1$, otherwise $A_I\subseteq A_i^c$. For example, with events $A_1, A_2, A_3$, we have $A_{101}:=A_1\cap A_2^c\cap A_3$. We write $p_I:=\mathbb{P}(A_I)$, so $p_i=\sum_{I\in{\boldsymbol I}: A_I\subseteq A_i}p_I$ for $i=1,\ldots,n$. 

Let $|I|$ be the sum of all the ``$1$''s in $I$ --- i.e. the so-called \emph{Hamming weight}. For $|I|>0$, the probability $p_I$ can be viewed as a stack of $|I|$ overlapping layers, with each layer of length $p_I$; indeed, \emph{w.l.o.g.}, one may visualize this using partitions of the unit interval (e.g. Fig.~\ref{fig_overlaps_intro}). More specifically, if a distribution assigns the $p_i$ probabilities to the $A_i$ events, its $p_I$ probabilities partition the unit interval into sub-intervals. These sub-intervals, in turn, induce stacks of overlapping horizontal layers as illustrated in Fig.~\ref{fig_overlaps_intro}. The number of layers, $|I|$, in the stack representing $p_I$, is the number of ``$A$'' events indicated by $I$. 
The event $\cap_{i=1}^{n}A_i^c$ indicated by the binary word $I=0\ldots 0$ does not have its probability $p_{0\ldots 0}$ represented by a stack of ``$A$'' layers: no ``$A$'' events contribute to this compound event.

\begin{figure}[t!]
    \centering
        \begin{tikzpicture}
            \begin{scope}
                \coordinate (r1lu) at (0.3,-1.85);
                \coordinate (r1ld) at (0.3,-1.75);
                \coordinate (r1rd) at (2.5,-1.75);
                \coordinate (r1ru) at (2.5,-1.85);
                \fill[fill=gray!70] (r1lu) -- (r1ld) -- (r1rd) -- (r1ru) -- cycle;

                \coordinate (r2lu) at (1.8,-0.85);
                \coordinate (r2ld) at (1.8,-0.75);
                \coordinate (r2rd) at (5,-0.75);
                \coordinate (r2ru) at (5,-0.85);
                \fill[fill=gray!70] (r2lu) -- (r2ld) -- (r2rd) -- (r2ru) -- cycle;

                \coordinate (r3lu) at (0.3,-1.35);
                \coordinate (r3ld) at (0.3,-1.25);
                \coordinate (r3rd) at (5,-1.25);
                \coordinate (r3ru) at (5,-1.35);
                \fill[fill=gray!70] (r3lu) -- (r3ld) -- (r3rd) -- (r3ru) -- cycle;  

                \draw[line width=0.07cm,white] (1.8,-0.7) -- (1.8,-2.5);
                
                \draw[line width=0.07cm,white] (2.5,-0.7) -- (2.5,-2.5);





                    
                \draw[-{Latex[length=3mm]},line width=0.06cm] (-1,-2.25) -- (5.85,-2.25); 
                \node[anchor=north,scale=1] (x_axis_label) at (5,-2.65) {$\pmb{1}$}; 

                \node[anchor=north east,scale=1] (x_axis_label) at (-0.55,-2.62) {$\pmb{0}$}; 

                \draw[-{Latex[length=3mm]},line width=0.06cm] (-1,-2.25) -- (5.85,-2.25); 

                \coordinate (p111_l) at (1.8,-3);
                \coordinate (p111_r) at (2.55,-3);
                \draw[|-|,line width=0.06cm] ($(p111_l)!5mm!90:(p111_r)$)--($(p111_r)!5mm!-90:(p111_l)$);
                \node[anchor=west,scale=1] (p111_lab) at (1.8,-2.9) {$p_{111}$}; 

                \coordinate (p101_l) at (0.3,-3);
                \coordinate (p101_r) at (1.75,-3);
                \draw[|-|,line width=0.06cm] ($(p101_l)!5mm!90:(p101_r)$)--($(p101_r)!5mm!-90:(p101_l)$);
                \node[anchor=west,scale=1] (p101_lab) at (0.65,-2.9) {$p_{101}$}; 

                \coordinate (p011_l) at (2.6,-3);
                \coordinate (p011_r) at (5,-3);
                \draw[|-|,line width=0.06cm] ($(p011_l)!5mm!90:(p011_r)$)--($(p011_r)!5mm!-90:(p011_l)$);
                \node[anchor=west,scale=1] (p011_lab) at (3.45,-2.9) {$p_{011}$}; 

                \coordinate (p000_l) at (-0.85,-3);
                \coordinate (p000_r) at (0.26,-3);
                \draw[|-|,line width=0.06cm] ($(p000_l)!5mm!90:(p000_r)$)--($(p000_r)!5mm!-90:(p000_l)$);
                \node[anchor=west,scale=1] (p000_lab) at (-0.65,-2.9) {$p_{000}$}; 
                
                \node[anchor=west,scale=1] (p1_lab) at (-1,-1.8) {$p_{1}$}; 
                \node[anchor=west,scale=1] (p2_lab) at (-1,-0.8) {$p_{2}$}; 
                \node[anchor=west,scale=1] (p3_lab) at (-1,-1.3) {$p_{3}$}; 

            \end{scope}    
        \end{tikzpicture}
    \caption{
    }
    \label{fig_overlaps_intro}
\end{figure}
\section{Results. }
\label{sec_bounds}

\subsection{Construction of Sharp Bounds on ``$\boldsymbol k$--out--of--$\boldsymbol n$'' Probabilities.}
\label{subsec_constructedkoutofnbounds}

\begin{theorem}
\label{thrm_sup}
Let $k\in\{1,\ldots,n\}$. Then,
\begin{align}
\label{eqn_optprob}
&\sup\limits_{\mathbb{P}} \mathbb{P}(\text{at least }k\text{--out--of--}n\text{ events}) \nonumber\\
\text{s.t.}\,&\,\, \mathbb{P}(A_1)=p_1,\ldots,\mathbb{P}(A_n)=p_n
\end{align}
has the solution 
\begin{align}
\label{eqn_soln}
\min\!\left\{\sum_{i=1}^{n-r^\ast}\frac{p_i}{k-r^\ast},\,1\right\}
\end{align}
where 
$$r^\ast:=\left\{\begin{array}{ll}\max\!\left\{r\in\{1,\ldots,k-1\}\,\,\middle|\,\, p_{n-r+1} \geqslant \sum\limits_{i=1}^{n-r+1}\left(\frac{p_i}{k-r+1}\right)\right\}\!,&\text{if max exists} \\ 0,&\text{otherwise}\end{array}\right.$$
\end{theorem}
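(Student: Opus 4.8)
The plan is to show that the quantity in \eqref{eqn_soln} is at once an upper bound valid for \emph{every} feasible $\mathbb{P}$ and is attained by an explicit construction, so the supremum equals it (and is in fact a maximum). Throughout write $X_i=\mathbf{1}_{A_i}$, $Y=\sum_{i=1}^n X_i$, and for $r\in\{0,1,\ldots,k-1\}$ set $B_r:=\frac{1}{k-r}\sum_{i=1}^{n-r}p_i$, so that \eqref{eqn_soln} reads $\min\{B_{r^\ast},1\}$. For the upper bound, fix $r\in\{0,\ldots,k-1\}$ and work on $E=\{Y\ge k\}$: since at most $r$ of the top events $A_{n-r+1},\ldots,A_n$ can occur there, at least $k-r$ of the bottom events $A_1,\ldots,A_{n-r}$ must occur, giving the pointwise inequality $(k-r)\mathbf{1}_E\le\mathbf{1}_E\sum_{i=1}^{n-r}X_i\le\sum_{i=1}^{n-r}X_i$. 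Taking expectations yields $(k-r)\,\mathbb{P}(E)\le\sum_{i=1}^{n-r}p_i$, i.e. $\mathbb{P}(E)\le B_r$, and with $\mathbb{P}(E)\le 1$ we get $\mathbb{P}(Y\ge k)\le\min\{1,\min_{0\le r\le k-1}B_r\}$.

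Next I would identify $r^\ast$ as the minimizer of $r\mapsto B_r$. A short computation gives $B_{r-1}-B_r=\bigl(p_{n-r+1}(k-r)-\sum_{i=1}^{n-r}p_i\bigr)\big/\bigl((k-r+1)(k-r)\bigr)$, whose sign matches $f(r):=p_{n-r+1}(k-r)-\sum_{i=1}^{n-r}p_i$; moreover the defining inequality $p_{n-r+1}\ge\sum_{i=1}^{n-r+1}\frac{p_i}{k-r+1}$ is precisely $f(r)\ge 0$. Using $p_1<\cdots<p_n$ one checks $f(r)-f(r+1)=(k-r)(p_{n-r+1}-p_{n-r})>0$, so $f$ is strictly decreasing, the set $\{r:f(r)\ge 0\}$ is an initial segment, and $r^\ast=\max\{r:f(r)\ge 0\}$ is exactly the index at which $B_r$ stops decreasing. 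Hence $B_{r^\ast}=\min_{0\le r\le k-1}B_r$, and the upper bound equals $\min\{B_{r^\ast},1\}$.

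For attainment I would build $\mathbb{P}$ on the unit interval with Lebesgue measure by a layer/arc packing, as suggested by Fig.~\ref{fig_overlaps_intro}. When $B_{r^\ast}\le 1$, set $E=[0,B_{r^\ast})$, force each top event $A_j$ ($j>n-r^\ast$) to contain $E$ (feasible since $f(r^\ast)\ge 0$ gives $p_{n-r^\ast+1}\ge B_{r^\ast}$) and place its leftover mass $p_j-B_{r^\ast}$ in $[B_{r^\ast},1)$; then place the $n-r^\ast$ bottom events inside $E$ so that every point of $E$ is covered exactly $k-r^\ast$ times. This is possible because $\sum_{i=1}^{n-r^\ast}p_i=(k-r^\ast)B_{r^\ast}$ and each bottom $p_i\le B_{r^\ast}$ — the binding case $p_{n-r^\ast}\le B_{r^\ast}$ being exactly $f(r^\ast+1)\le 0$, forced by maximality of $r^\ast$ — so laying the bottom events consecutively as arcs on a circle of circumference $B_{r^\ast}$ wraps around exactly $k-r^\ast$ times with uniform depth. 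By construction $\{Y\ge k\}=E$, so $\mathbb{P}(Y\ge k)=B_{r^\ast}$. When $\min_r B_r\ge 1$ (answer $1$), the same consecutive-arc idea applied to all $n$ events on the circle $[0,1)$, whose total length $\sum p_i\ge k$, gives covering depth $\ge\lfloor\sum p_i\rfloor\ge k$ everywhere, hence $Y\ge k$ almost surely.

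I expect the main obstacle to be this attainment step, in two parts. First, one must verify that the feasibility inequalities the construction needs ($p_{n-r^\ast+1}\ge B_{r^\ast}$ for the top events and $p_{n-r^\ast}\le B_{r^\ast}$ for the bottom events) are \emph{exactly} the sign conditions $f(r^\ast)\ge 0$ and $f(r^\ast+1)\le 0$ implied by the maximality of $r^\ast$. Second, one must prove the packing lemma that consecutive placement of arcs of lengths summing to $\ell m$, each of length at most $m$, on a circle of circumference $m$ yields uniform covering depth $\ell$ — the crux being that each arc, having length $\le m$, meets each fiber of the $\ell$-to-one wrapping map $[0,\ell m)\to[0,m)$ at most once. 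The boundary indices $r^\ast=0$ and $r^\ast=k-1$, and the degenerate case $k=1$, need a quick separate check but present no real difficulty.
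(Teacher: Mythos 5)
Your proposal is correct, and it takes a genuinely different route from the paper. The paper proceeds by a sequence of explicit mass--rearrangement transformations on stack configurations (steps 1--3 restrict the feasible set to canonical ``Type I/II'' distributions in $\mathcal D''$, and step 4 evaluates the objective there and rules out $r\neq r^\ast$ by contradiction); you instead give a classical two--sided argument. Your upper bound comes from the pointwise counting inequality $(k-r)\mathbf 1_{\{Y\geqslant k\}}\leqslant\sum_{i=1}^{n-r}X_i$ for each $r\in\{0,\dots,k-1\}$, which is in effect a dual feasibility certificate for the underlying LP and yields $\mathbb P(Y\geqslant k)\leqslant\min\{1,\min_r B_r\}$ in two lines, bypassing the informal layer--swapping arguments that are the least rigorous part of the paper's steps 1--3. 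Your identification of $r^\ast$ as the argmin of $r\mapsto B_r$ via the strictly decreasing function $f(r)=(k-r)p_{n-r+1}-\sum_{i=1}^{n-r}p_i$ is a clean replacement for the paper's case analysis in step 4 (I checked the algebra: $B_{r-1}-B_r$ has the sign of $f(r)$, $f(r)-f(r+1)=(k-r)(p_{n-r+1}-p_{n-r})>0$, and $f(r)\geqslant 0$ is exactly the inequality defining $r^\ast$), and it also explains \emph{why} $r^\ast$ is defined as it is. Your attainment construction --- top events containing $E=[0,B_{r^\ast})$, bottom events packed as consecutive arcs on a circle of circumference $B_{r^\ast}$ with uniform depth $k-r^\ast$ --- produces exactly one of the paper's Type I (resp.\ Type II, when the bound is $1$) extremal distributions, and the feasibility conditions you isolate ($p_{n-r^\ast+1}\geqslant B_{r^\ast}$ from $f(r^\ast)\geqslant 0$, $p_{n-r^\ast}\leqslant B_{r^\ast}$ from $f(r^\ast+1)<0$ or trivially when $r^\ast=k-1$) are precisely right; the packing lemma you defer is standard and your one--line justification (an arc of length at most the circumference meets each fiber of the wrapping map at most once) is the correct crux. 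What the paper's longer route buys is the structural classification of \emph{all} extremal distributions and the link to its dynamic--programming framework; what your route buys is a shorter, fully rigorous proof of the bound itself with the sharpness certificate made explicit.
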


\begin{proof}
In 4 steps, construct a probability distribution that attains the supremum \eqref{eqn_soln}:
\begin{enumerate}
\item There exist feasible distributions that satisfy the constraints in \eqref{eqn_optprob} (e.g. stack all $n$ of the ``$A$'' events' layers over the unit interval, \emph{\`{a} la} Fig.\ \ref{fig_overlaps_intro}). Any feasible distribution $\mathbb{P}$ can be used to construct a feasible distribution with objective function value that is no smaller. The constructed distribution can assign non-zero probability only to compound events: \textbf{i)} involving at least $k$ of the ``$A$''s, or \textbf{ii)} only some $A_{n-k+2},\ldots,A_{n}$, or \textbf{iii)} the event $\cap_{i=1}^{n}A_i^c$. Restrict \eqref{eqn_optprob} to the set $\mathcal D$ of these constructed distributions;
\item Any ${\mathbb{P}}\in\mathcal D$ can be used to construct a feasible distribution with objective function value that is no smaller. The constructed distribution can assign non-zero probability only to compound events: \textbf{i)} involving exactly $k$ many ``$A$''s, or \textbf{ii)} only some $A_{n-k+2},\ldots,A_n$, or \textbf{iii)} more than $k$ many ``$A$''s (only when $\cap_{i=1}^{n}A_i^c$ is a null event), or \textbf{iv)} the event $\cap_{i=1}^{n}A_i^c$. Restrict \eqref{eqn_optprob} to the set ${\mathcal D}^{'}$ of these distributions;
\item Any ${{\mathbb{P}}\in\mathcal D}^{'}$ can be used to construct a feasible distribution with objective function value that is no smaller. The constructed distribution can assign non-zero probability only to compound events: \textbf{i)} involving exactly $k$ many ``$A$''s, or \textbf{ii)} more than $k$ many ``$A$''s (only when $\cap_{i=1}^{n}A_i^c$ is a null event), or \textbf{iii)} the event $\cap_{i=r}^{n}A_{i}$ for some $(n-k+2)\leqslant r\leqslant n$ (where $\cap_{i=r}^{n}A_{i}$ forms part of all compound events with exactly $k$ many ``$A$''s), or \textbf{iv)} the event $\cap_{i=1}^{n}A_i^c$. Restrict \eqref{eqn_optprob} to the set ${\mathcal D}^{''}$ of these distributions;
\item Prove the supremum is \eqref{eqn_soln} by using ${\mathcal D}^{''}$.
\end{enumerate}
What follows is an outline of these 4 steps in more detail.

\paragraph{{\textbf{step 1:}}}
\label{subsec_proof_stage1}
Let $\mathbb{P}$ be any distribution that satisfies the constraints in \eqref{eqn_optprob}. If $\mathbb{P}$ assigns non-zero probabilities to compound events involving fewer than $k$ of the ``$A$''s --- including $A_i$ for some $i< (n-k+2)$ --- then $\mathbb{P}$ can be transformed without reducing the objective function value: create a new stack by overlapping the layers from all of these ``fewer than $k$ layers'' stacks\footnote{An event that contributes layers to more than one of these stacks can only contribute a single layer to the new stack. After forming the new stack, any ``leftover'' layers for this event remain at their original locations.}. If this new stack does not consist of $k$, or more, of the ``$A$''s (including $A_i$), then there must be some $A_j$ that is not involved in this new stack (where $j\geqslant n-k+2$); otherwise, there is a contradiction: this new stack must include layers for $A_{n-k+2},A_{n-k+3},\ldots,A_n,A_i$, which is $k$ of the ``$A$''s after all! This ``$A_j$'' layer must be part of other stacks with $k$, or more, layers. Since $p_j>p_i$, there must be a stack containing an $A_j$ layer and no $A_i$ layer. So, swap the $A_i$ layers --- in stacks with at most ($k-1$) layers --- with $A_j$ layers --- in stacks with at least $k$ layers that do not include $A_i$. Doing this ensures that any stack of at most $k-1$ layers will only involve ``$A$'' events such as $A_{n-k+2},\ldots,A_n$. Consequently, this transformation restricts the set of feasible distributions, to those distributions that assign non-zero probability only to compound events: \textbf{i)} involving at least $k$ of the ``$A$''s, or \textbf{ii)} involving only some $A_{n-k+2},\ldots,A_n$, or \textbf{iii)} the event $\cap_{i=1}^{n}A_i^c$. Denote this restricted set $\mathcal D$.

\begin{figure}[t!]
\captionsetup[figure]{format=hang}
    \begin{subfigure}[]{1\linewidth}
        \centering
	\begin{tikzpicture}
            \begin{scope}
                \coordinate (r1lu) at (0.3,-1.85);
                \coordinate (r1ld) at (0.3,-1.75);
                \coordinate (r1rd) at (2.5,-1.75);
                \coordinate (r1ru) at (2.5,-1.85);
                \fill[fill=gray!70] (r1lu) -- (r1ld) -- (r1rd) -- (r1ru) -- cycle;

                \coordinate (r2lu) at (0.3,-0.85);
                \coordinate (r2ld) at (0.3,-0.75);
                \coordinate (r2rd) at (2.5,-0.75);
                \coordinate (r2ru) at (2.5,-0.85);
                \fill[fill=gray!70] (r2lu) -- (r2ld) -- (r2rd) -- (r2ru) -- cycle;

                \coordinate (r3lu) at (0.3,-1.35);
                \coordinate (r3ld) at (0.3,-1.25);
                \coordinate (r3rd) at (2.5,-1.25);
                \coordinate (r3ru) at (2.5,-1.35);
                \fill[fill=gray!70] (r3lu) -- (r3ld) -- (r3rd) -- (r3ru) -- cycle;  

                




                    
                \draw[-{Latex[length=3mm]},line width=0.06cm] (-1,-2.25) -- (5.85,-2.25); 
                \node[anchor=north,scale=1] (x_axis_label) at (5,-2.65) {$\pmb{1}$}; 

                \node[anchor=north east,scale=1] (x_axis_label) at (-0.55,-2.62) {$\pmb{0}$}; 

                \draw[-{Latex[length=3mm]},line width=0.06cm] (-1,-2.25) -- (5.85,-2.25); 

                \coordinate (p111_l) at (0.3,-3);
                \coordinate (p111_r) at (2.55,-3);
                \draw[|-|,line width=0.06cm] ($(p111_l)!5mm!90:(p111_r)$)--($(p111_r)!5mm!-90:(p111_l)$);
                \node[anchor=west,scale=1] (p111_lab) at (1.1,-2.9) {$p_{111}$}; 


                \coordinate (p000_l) at (2.6,-3);
                \coordinate (p000_r) at (5,-3);
                \draw[|-|,line width=0.06cm] ($(p011_l)!5mm!90:(p011_r)$)--($(p011_r)!5mm!-90:(p011_l)$);

                \coordinate (p000_l) at (-0.85,-3);
                \coordinate (p000_r) at (0.26,-3);
                \draw[|-|,line width=0.06cm] ($(p000_l)!5mm!90:(p000_r)$)--($(p000_r)!5mm!-90:(p000_l)$);

                \node[anchor=west,scale=1] (G6) at (1.35,-3.5) {$p_{000}$};
                \node[scale=1.5] (G7) at (-0.15,-3.3) {$\pmb{\underbrace{}{}}$};    
                \draw[line width = {0.05cm}] (-0.2,-3.5) .. controls (-0.2,-3.9) .. (G6);
                \node[scale=3] (G8) at (3.95,-3) {$\pmb{\underbrace{}{}}$};
                \draw[line width = {0.05cm}] (3.85,-3.4) .. controls (4,-3.9) .. (G6);
    

                \node[anchor=west,scale=1] (p1_lab) at (-1,-1.8) {$p_{1}$}; 
                \node[anchor=west,scale=1] (p2_lab) at (-1,-0.8) {$p_{2}$}; 
                \node[anchor=west,scale=1] (p3_lab) at (-1,-1.3) {$p_{3}$}; 

            \end{scope}    
        \end{tikzpicture}
    \caption{\textit{This stack of 3 layers represents $p_{111}$. Stacks with 2 layers will be created from this.}
    }
	\label{fig_splittingoverlappinglayers_1}
\end{subfigure}
\par\bigskip
\begin{subfigure}[]{1.0\linewidth}
    \centering
	\begin{tikzpicture}
            \begin{scope}
                \coordinate (r1lu) at (0.3,-1.85);
                \coordinate (r1ld) at (0.3,-1.75);
                \coordinate (r1rd) at (2.5,-1.75);
                \coordinate (r1ru) at (2.5,-1.85);
                \fill[fill=gray!70] (r1lu) -- (r1ld) -- (r1rd) -- (r1ru) -- cycle;

                \coordinate (r2lu) at (0.3,-0.85);
                \coordinate (r2ld) at (0.3,-0.75);
                \coordinate (r2rd) at (2.5,-0.75);
                \coordinate (r2ru) at (2.5,-0.85);
                \fill[fill=gray!70] (r2lu) -- (r2ld) -- (r2rd) -- (r2ru) -- cycle;

                \coordinate (r3lu) at (0.3,-1.35);
                \coordinate (r3ld) at (0.3,-1.25);
                \coordinate (r3rd) at (2.5,-1.25);
                \coordinate (r3ru) at (2.5,-1.35);
                \fill[fill=gray!70] (r3lu) -- (r3ld) -- (r3rd) -- (r3ru) -- cycle;  

                \draw[line width=0.07cm,white] (1.4,-0.7) -- (1.4,-2.5);
                
                \draw[line width=0.07cm,white] (2.5,-0.7) -- (2.5,-2.5);





                    
                \draw[-{Latex[length=3mm]},line width=0.06cm] (-1,-2.25) -- (5.85,-2.25); 
                \node[anchor=north,scale=1] (x_axis_label) at (5,-2.65) {$\pmb{1}$}; 

                \node[anchor=north east,scale=1] (x_axis_label) at (-0.55,-2.62) {$\pmb{0}$}; 

                \draw[-{Latex[length=3mm]},line width=0.06cm] (-1,-2.25) -- (5.85,-2.25); 

                \coordinate (p111_l) at (0.3,-3);
                \coordinate (p111_r) at (2.55,-3);
                \draw[|-|,line width=0.06cm] ($(p111_l)!5mm!90:(p111_r)$)--($(p111_r)!5mm!-90:(p111_l)$);
                \node[anchor=west,scale=1] (p111_lab) at (1.1,-2.9) {$p_{111}$}; 


                \coordinate (p000_l) at (2.6,-3);
                \coordinate (p000_r) at (5,-3);
                \draw[|-|,line width=0.06cm] ($(p011_l)!5mm!90:(p011_r)$)--($(p011_r)!5mm!-90:(p011_l)$);

                \coordinate (p000_l) at (-0.85,-3);
                \coordinate (p000_r) at (0.26,-3);
                \draw[|-|,line width=0.06cm] ($(p000_l)!5mm!90:(p000_r)$)--($(p000_r)!5mm!-90:(p000_l)$);

                 \node[anchor=west,scale=1] (G6) at (1.35,-3.5) {$p_{000}$};
                \node[scale=1.5] (G7) at (-0.15,-3.3) {$\pmb{\underbrace{}{}}$};    
                \draw[line width = {0.05cm}] (-0.2,-3.5) .. controls (-0.2,-3.9) .. (G6);
                \node[scale=3] (G8) at (3.95,-3) {$\pmb{\underbrace{}{}}$};
                \draw[line width = {0.05cm}] (3.85,-3.4) .. controls (4,-3.9) .. (G6);
    
                
                \node[anchor=west,scale=1] (p1_lab) at (-1,-1.8) {$p_{1}$}; 
                \node[anchor=west,scale=1] (p2_lab) at (-1,-0.8) {$p_{2}$}; 
                \node[anchor=west,scale=1] (p3_lab) at (-1,-1.3) {$p_{3}$}; 

            \end{scope}    
        \end{tikzpicture}
    \caption{\textit{Each layer is split into $2$ (i.e. $\binom{2}{1}$) smaller layers. This creates $2$ stacks with $3$ layers each.}
    }
	\label{fig_splittingoverlappinglayers_2}
\end{subfigure}
\par\bigskip
\begin{subfigure}[]{1.0\linewidth}
    \centering
	\begin{tikzpicture}
            \begin{scope}
                \coordinate (r1lu) at (0.3,-1.85);
                \coordinate (r1ld) at (0.3,-1.75);
                \coordinate (r1rd) at (3.7,-1.75);
                \coordinate (r1ru) at (3.7,-1.85);
                \fill[fill=gray!70] (r1lu) -- (r1ld) -- (r1rd) -- (r1ru) -- cycle;

                \coordinate (r2lu) at (0.3,-0.85);
                \coordinate (r2ld) at (0.3,-0.75);
                \coordinate (r2rd) at (3.7,-0.75);
                \coordinate (r2ru) at (3.7,-0.85);
                \fill[fill=gray!70] (r2lu) -- (r2ld) -- (r2rd) -- (r2ru) -- cycle;

                \coordinate (r3lu) at (0.3,-1.35);
                \coordinate (r3ld) at (0.3,-1.25);
                \coordinate (r3rd) at (3.7,-1.25);
                \coordinate (r3ru) at (3.7,-1.35);
                \fill[fill=gray!70] (r3lu) -- (r3ld) -- (r3rd) -- (r3ru) -- cycle;

                \coordinate (r3lu) at (0.3,-1.4);
                \coordinate (r3ld) at (0.3,-1.2);
                \coordinate (r3rd) at (1.4,-1.2);
                \coordinate (r3ru) at (1.4,-1.4);
                \fill[fill=white] (r3lu) -- (r3ld) -- (r3rd) -- (r3ru) -- cycle;

                \coordinate (r2lu) at (1.4,-0.9);
                \coordinate (r2ld) at (1.4,-0.7);
                \coordinate (r2rd) at (2.6,-0.7);
                \coordinate (r2ru) at (2.6,-0.9);
                \fill[fill=white] (r2lu) -- (r2ld) -- (r2rd) -- (r2ru) -- cycle;

                \coordinate (r1lu) at (2.5,-1.9);
                \coordinate (r1ld) at (2.5,-1.7);
                \coordinate (r1rd) at (3.7,-1.7);
                \coordinate (r1ru) at (3.7,-1.9);
                \fill[fill=white] (r1lu) -- (r1ld) -- (r1rd) -- (r1ru) -- cycle;

                \draw[line width=0.07cm,white] (1.4,-0.7) -- (1.4,-2.5);
                
                \draw[line width=0.07cm,white] (2.5,-0.7) -- (2.5,-2.5);





                    
                \draw[-{Latex[length=3mm]},line width=0.06cm] (-1,-2.25) -- (5.85,-2.25); 
                \node[anchor=north,scale=1] (x_axis_label) at (5,-2.65) {$\pmb{1}$}; 

                \node[anchor=north east,scale=1] (x_axis_label) at (-0.55,-2.62) {$\pmb{0}$}; 

                \draw[-{Latex[length=3mm]},line width=0.06cm] (-1,-2.25) -- (5.85,-2.25); 

                \coordinate (p101_l) at (1.45,-3);
                \coordinate (p101_r) at (2.55,-3);
                \draw[|-|,line width=0.06cm] ($(p101_l)!5mm!90:(p101_r)$)--($(p101_r)!5mm!-90:(p101_l)$);
                \node[anchor=west,scale=1] (p101_lab) at (1.6,-2.9) {$p_{101}$}; 

                \coordinate (p110_l) at (0.3,-3);
                \coordinate (p110_r) at (1.4,-3);
                \draw[|-|,line width=0.06cm] ($(p110_l)!5mm!90:(p110_r)$)--($(p110_r)!5mm!-90:(p110_l)$);
                \node[anchor=west,scale=1] (p110_lab) at (0.45,-2.9) {$p_{110}$}; 

                \coordinate (p011_l) at (2.6,-3);
                \coordinate (p011_r) at (3.8,-3);
                \draw[|-|,line width=0.06cm] ($(p011_l)!5mm!90:(p011_r)$)--($(p011_r)!5mm!-90:(p011_l)$);
                \node[anchor=west,scale=1] (p011_lab) at (2.8,-2.9) {$p_{011}$}; 

                \coordinate (p000_l) at (-0.85,-3);
                \coordinate (p000_r) at (0.26,-3);
                \draw[|-|,line width=0.06cm] ($(p000_l)!5mm!90:(p000_r)$)--($(p000_r)!5mm!-90:(p000_l)$);

                \coordinate (p000_l) at (3.85,-3);
                \coordinate (p000_r) at (5,-3);
                \draw[|-|,line width=0.06cm] ($(p000_l)!5mm!90:(p000_r)$)--($(p000_r)!5mm!-90:(p000_l)$);

                 \node[anchor=west,scale=1] (G6) at (1.6,-3.5) {$p_{000}$};
                \node[scale=1.5] (G7) at (-0.15,-3.3) {$\pmb{\underbrace{}{}}$};    
                \draw[line width = {0.05cm}] (-0.2,-3.5) .. controls (-0.2,-3.9) .. (G6);
                \node[scale=1.5] (G8) at (4.5,-3.3) {$\pmb{\underbrace{}{}}$};
                \draw[line width = {0.05cm}] (4.45,-3.5) .. controls (4.5,-3.9) .. (G6);
    
                
                \node[anchor=west,scale=1] (p1_lab) at (-1,-1.8) {$p_{1}$}; 
                \node[anchor=west,scale=1] (p2_lab) at (-1,-0.8) {$p_{2}$}; 
                \node[anchor=west,scale=1] (p3_lab) at (-1,-1.3) {$p_{3}$}; 

            \end{scope}    
        \end{tikzpicture}
    \caption{\textit{From 2 stacks in Fig.~\ref{fig_splittingoverlappinglayers_2}, there are 3 (i.e. $\binom{3}{2}$) stacks created, with each new stack containing 2 layers. One new stack converts part of the event $\cap_{i=1}^{n}A_{i}^c$ (in Figs.~\ref{fig_splittingoverlappinglayers_1}, \ref{fig_splittingoverlappinglayers_2}) into the event $A_{1}^c\cap A_{2}\cap A_{3}$, while the remaining new stacks replace $\cap_{i=1}^{3}A_i$ with events $A_1\cap A_2 \cap A^c_3$ and $A_1\cap A^c_2 \cap A_3$.}
    }
	\label{fig_splittingoverlappinglayers_3}
\end{subfigure}
\caption{}
\label{fig_splittingoverlappinglayers}
\end{figure}

\paragraph{{\textbf{step 2:}}}
\label{subsec_proof_stage2}
Choose an arbitrary ${\mathbb{P}}\in\mathcal D$. If $\mathbb{P}$ induces a stack with more than $k$ of the ``$A$''s, this stack can be split into smaller stacks each consisting of exactly $k$ of the ``$A$''s. The only scenario where such a stack cannot be split is if the distribution has $\cap_{i=1}^{n}A_i^c$ as a null event (i.e. $p_{0...0}=0$). In more detail, if an initial stack has $m>k$ layers, then each layer is split into $\binom{m-1}{k-1}$ horizontal layers from which we construct $\binom{m}{k}$ stacks, or fewer, each with $k$ overlapping layers\footnote{As many $k$-layered stacks as can fit in place of the initial ``$m$ layered'' stack and in place of the probability $p_{0...0}$}. This splitting is illustrated in Fig.~\ref{fig_splittingoverlappinglayers} with $m=3$, $k=2$, and $p_1=p_2=p_3$. Clearly, ``splitting'' does not decrease (and typically increases) the objective function value, since events with no ``$A$''s are replaced with events involving $k$ many ``$A$''s, but not \emph{vice versa}. Hence, restrict $\mathcal D$ to the set ${\mathcal D}^{'}$ of those distributions in $\mathcal D$ that assign non-zero probability only to compound events: \textbf{i)} involving exactly $k$ of the ``$A$''s, or \textbf{ii)} involving only some $A_{n-k+2},\ldots,A_n$, or \textbf{iii)} involving more than $k$ of the ``$A$''s (only when $\cap_{i=1}^{n}A_i^c$ is a null event), or \textbf{iv)} the event $\cap_{i=1}^{n}A_i^c$.

\begin{figure}[!t]
\captionsetup[figure]{format=hang}
    \begin{subfigure}[]{1\linewidth}
        \centering
	\begin{tikzpicture}
            \begin{scope}
                \coordinate (r1lu) at (0.3,-1.85);
                \coordinate (r1ld) at (0.3,-1.75);
                \coordinate (r1rd) at (3.7,-1.75);
                \coordinate (r1ru) at (3.7,-1.85);
                \fill[fill=gray!70] (r1lu) -- (r1ld) -- (r1rd) -- (r1ru) -- cycle;

                \coordinate (r2lu) at (0.3,-0.85);
                \coordinate (r2ld) at (0.3,-0.75);
                \coordinate (r2rd) at (3.7,-0.75);
                \coordinate (r2ru) at (3.7,-0.85);
                \fill[fill=gray!70] (r2lu) -- (r2ld) -- (r2rd) -- (r2ru) -- cycle;

                \coordinate (r3lu) at (0.3,-1.35);
                \coordinate (r3ld) at (0.3,-1.25);
                \coordinate (r3rd) at (3.7,-1.25);
                \coordinate (r3ru) at (3.7,-1.35);
                \fill[fill=white] (r3lu) -- (r3ld) -- (r3rd) -- (r3ru) -- cycle;

                \coordinate (r3lu) at (-0.85,-1.35);
                \coordinate (r3ld) at (-0.85,-1.25);
                \coordinate (r3rd) at (0.2,-1.25);
                \coordinate (r3ru) at (0.2,-1.35);
                \fill[fill=gray!70] (r3lu) -- (r3ld) -- (r3rd) -- (r3ru) -- cycle;

                \coordinate (r3lu) at (-0.85,-1.85);
                \coordinate (r3ld) at (-0.85,-1.75);
                \coordinate (r3rd) at (0.2,-1.75);
                \coordinate (r3ru) at (0.2,-1.85);
                \fill[fill=gray!70] (r3lu) -- (r3ld) -- (r3rd) -- (r3ru) -- cycle;

                \coordinate (r3lu) at (-0.85,-0.85);
                \coordinate (r3ld) at (-0.85,-0.75);
                \coordinate (r3rd) at (0.2,-0.75);
                \coordinate (r3ru) at (0.2,-0.85);
                \fill[fill=white] (r3lu) -- (r3ld) -- (r3rd) -- (r3ru) -- cycle;

                \coordinate (r3lu) at (3.8,-1.35);
                \coordinate (r3ld) at (3.8,-1.25);
                \coordinate (r3rd) at (4.85,-1.25);
                \coordinate (r3ru) at (4.85,-1.35);
                \fill[fill=white] (r3lu) -- (r3ld) -- (r3rd) -- (r3ru) -- cycle;
                
                \coordinate (r3lu) at (2.5,-0.9);
                \coordinate (r3ld) at (2.5,-0.7);
                \coordinate (r3rd) at (3.7,-0.7);
                \coordinate (r3ru) at (3.7,-0.9);
                \fill[fill=white] (r3lu) -- (r3ld) -- (r3rd) -- (r3ru) -- cycle;




                
                \draw[line width=0.07cm,white] (2.5,-0.7) -- (2.5,-2.5);





                    
                \draw[-{Latex[length=3mm]},line width=0.06cm] (-1,-2.25) -- (5.85,-2.25); 
                \node[anchor=north,scale=1] (x_axis_label) at (5,-2.65) {$\pmb{1}$}; 

                \node[anchor=north east,scale=1] (x_axis_label) at (-0.55,-2.62) {$\pmb{0}$}; 

                \draw[-{Latex[length=3mm]},line width=0.06cm] (-1,-2.25) -- (5.85,-2.25); 

                \coordinate (p001_l) at (0.3,-3);
                \coordinate (p001_r) at (2.55,-3);
                \draw[|-|,line width=0.06cm] ($(p001_l)!5mm!90:(p001_r)$)--($(p001_r)!5mm!-90:(p001_l)$);
                \node[anchor=west,scale=1] (p001_lab) at (1.1,-2.9) {$p_{110}$}; 

                \node[anchor=west,scale=1] (p001_lab) at (-0.65,-2.9) {$p_{101}$}; 

                \coordinate (p011_l) at (2.6,-3);
                \coordinate (p011_r) at (3.8,-3);
                \draw[|-|,line width=0.06cm] ($(p011_l)!5mm!90:(p011_r)$)--($(p011_r)!5mm!-90:(p011_l)$);
                \node[anchor=west,scale=1] (p011_lab) at (2.8,-2.9) {$p_{100}$}; 

                \coordinate (p000_l) at (-0.85,-3);
                \coordinate (p000_r) at (0.26,-3);
                \draw[|-|,line width=0.06cm] ($(p000_l)!5mm!90:(p000_r)$)--($(p000_r)!5mm!-90:(p000_l)$);
                \node[anchor=west,scale=1] (p000_lab) at (4,-2.9) {$p_{000}$}; 

                \coordinate (p000_l) at (3.85,-3);
                \coordinate (p000_r) at (5,-3);
                \draw[|-|,line width=0.06cm] ($(p000_l)!5mm!90:(p000_r)$)--($(p000_r)!5mm!-90:(p000_l)$);


                
                \node[anchor=west,scale=1] (p1_lab) at (-1.5,-1.8) {$p_{1}$}; 
                \node[anchor=west,scale=1] (p2_lab) at (-1.5,-0.8) {$p_{2}$}; 
                \node[anchor=west,scale=1] (p3_lab) at (-1.5,-1.3) {$p_{3}$}; 

            \end{scope}    
        \end{tikzpicture}
    \caption{\textit{A Type I distribution that attains ${\mathbb{P}}(\text{at least }2\text{--out--of--}3\text{ events})=\sum_{i=2}^{3}p_i$.}}
	\label{fig_typeIdistribution}
\end{subfigure}
\par\bigskip
\begin{subfigure}[]{1.0\linewidth}
    \centering
	\begin{tikzpicture}
            \begin{scope}
                \coordinate (r1lu) at (0.3,-1.85);
                \coordinate (r1ld) at (0.3,-1.75);
                \coordinate (r1rd) at (3.7,-1.75);
                \coordinate (r1ru) at (3.7,-1.85);
                \fill[fill=gray!70] (r1lu) -- (r1ld) -- (r1rd) -- (r1ru) -- cycle;

                \coordinate (r2lu) at (0.3,-0.85);
                \coordinate (r2ld) at (0.3,-0.75);
                \coordinate (r2rd) at (3.7,-0.75);
                \coordinate (r2ru) at (3.7,-0.85);
                \fill[fill=gray!70] (r2lu) -- (r2ld) -- (r2rd) -- (r2ru) -- cycle;

                \coordinate (r3lu) at (0.3,-1.35);
                \coordinate (r3ld) at (0.3,-1.25);
                \coordinate (r3rd) at (3.7,-1.25);
                \coordinate (r3ru) at (3.7,-1.35);
                \fill[fill=gray!70] (r3lu) -- (r3ld) -- (r3rd) -- (r3ru) -- cycle;

                \coordinate (r3lu) at (-0.85,-1.35);
                \coordinate (r3ld) at (-0.85,-1.25);
                \coordinate (r3rd) at (0.2,-1.25);
                \coordinate (r3ru) at (0.2,-1.35);
                \fill[fill=gray!70] (r3lu) -- (r3ld) -- (r3rd) -- (r3ru) -- cycle;

                \coordinate (r3lu) at (-0.85,-1.85);
                \coordinate (r3ld) at (-0.85,-1.75);
                \coordinate (r3rd) at (0.2,-1.75);
                \coordinate (r3ru) at (0.2,-1.85);
                \fill[fill=gray!70] (r3lu) -- (r3ld) -- (r3rd) -- (r3ru) -- cycle;

                \coordinate (r3lu) at (-0.85,-0.85);
                \coordinate (r3ld) at (-0.85,-0.75);
                \coordinate (r3rd) at (0.2,-0.75);
                \coordinate (r3ru) at (0.2,-0.85);
                \fill[fill=gray!70] (r3lu) -- (r3ld) -- (r3rd) -- (r3ru) -- cycle;

                \coordinate (r3lu) at (3.8,-1.35);
                \coordinate (r3ld) at (3.8,-1.25);
                \coordinate (r3rd) at (4.85,-1.25);
                \coordinate (r3ru) at (4.85,-1.35);
                \fill[fill=gray!70] (r3lu) -- (r3ld) -- (r3rd) -- (r3ru) -- cycle;

                \coordinate (r3lu) at (3.8,-1.85);
                \coordinate (r3ld) at (3.8,-1.75);
                \coordinate (r3rd) at (4.85,-1.75);
                \coordinate (r3ru) at (4.85,-1.85);
                \fill[fill=gray!70] (r3lu) -- (r3ld) -- (r3rd) -- (r3ru) -- cycle;

                \coordinate (r3lu) at (3.8,-0.85);
                \coordinate (r3ld) at (3.8,-0.75);
                \coordinate (r3rd) at (4.85,-0.75);
                \coordinate (r3ru) at (4.85,-0.85);
                \fill[fill=gray!70] (r3lu) -- (r3ld) -- (r3rd) -- (r3ru) -- cycle;
                
                \coordinate (r3lu) at (0.3,-1.4);
                \coordinate (r3ld) at (0.3,-1.2);
                \coordinate (r3rd) at (1.4,-1.2);
                \coordinate (r3ru) at (1.4,-1.4);
                \fill[fill=white] (r3lu) -- (r3ld) -- (r3rd) -- (r3ru) -- cycle;

                \coordinate (r2lu) at (1.4,-0.9);
                \coordinate (r2ld) at (1.4,-0.7);
                \coordinate (r2rd) at (2.5,-0.7);
                \coordinate (r2ru) at (2.5,-0.9);
                \fill[fill=white] (r2lu) -- (r2ld) -- (r2rd) -- (r2ru) -- cycle;

                \coordinate (r1lu) at (2.5,-1.9);
                \coordinate (r1ld) at (2.5,-1.7);
                \coordinate (r1rd) at (3.7,-1.7);
                \coordinate (r1ru) at (3.7,-1.9);
                \fill[fill=white] (r1lu) -- (r1ld) -- (r1rd) -- (r1ru) -- cycle;

                \draw[line width=0.07cm,white] (1.4,-0.7) -- (1.4,-2.5);
                
                \draw[line width=0.07cm,white] (2.5,-0.7) -- (2.5,-2.5);





                    
                \draw[-{Latex[length=3mm]},line width=0.06cm] (-1,-2.25) -- (5.85,-2.25); 
                \node[anchor=north,scale=1] (x_axis_label) at (5,-2.65) {$\pmb{1}$}; 

                \node[anchor=north east,scale=1] (x_axis_label) at (-0.55,-2.62) {$\pmb{0}$}; 

                \draw[-{Latex[length=3mm]},line width=0.06cm] (-1,-2.25) -- (5.85,-2.25); 

                \coordinate (p101_l) at (1.45,-3);
                \coordinate (p101_r) at (2.55,-3);
                \draw[|-|,line width=0.06cm] ($(p101_l)!5mm!90:(p101_r)$)--($(p101_r)!5mm!-90:(p101_l)$);
                \node[anchor=west,scale=1] (p101_lab) at (1.6,-2.9) {$p_{101}$}; 

                \coordinate (p110_l) at (0.3,-3);
                \coordinate (p110_r) at (1.4,-3);
                \draw[|-|,line width=0.06cm] ($(p110_l)!5mm!90:(p110_r)$)--($(p110_r)!5mm!-90:(p110_l)$);
                \node[anchor=west,scale=1] (p110_lab) at (0.45,-2.9) {$p_{110}$}; 

                \coordinate (p011_l) at (2.6,-3);
                \coordinate (p011_r) at (3.8,-3);
                \draw[|-|,line width=0.06cm] ($(p011_l)!5mm!90:(p011_r)$)--($(p011_r)!5mm!-90:(p011_l)$);
                \node[anchor=west,scale=1] (p011_lab) at (2.8,-2.9) {$p_{011}$}; 

                \coordinate (p000_l) at (-0.85,-3);
                \coordinate (p000_r) at (0.26,-3);
                \draw[|-|,line width=0.06cm] ($(p000_l)!5mm!90:(p000_r)$)--($(p000_r)!5mm!-90:(p000_l)$);

                \coordinate (p000_l) at (3.85,-3);
                \coordinate (p000_r) at (5,-3);
                \draw[|-|,line width=0.06cm] ($(p000_l)!5mm!90:(p000_r)$)--($(p000_r)!5mm!-90:(p000_l)$);

                 \node[anchor=west,scale=1] (G6) at (1.6,-3.5) {$p_{111}$};
                \node[scale=1.5] (G7) at (-0.15,-3.3) {$\pmb{\underbrace{}{}}$};    
                \draw[line width = {0.05cm}] (-0.2,-3.5) .. controls (-0.2,-3.9) .. (G6);
                \node[scale=1.4] (G8) at (4.5,-3.3) {$\pmb{\underbrace{}{}}$};
                \draw[line width = {0.05cm}] (4.45,-3.5) .. controls (4.5,-3.9) .. (G6);

                
                \node[anchor=west,scale=1] (p1_lab) at (-1.5,-1.8) {$p_{1}$}; 
                \node[anchor=west,scale=1] (p2_lab) at (-1.5,-0.8) {$p_{2}$}; 
                \node[anchor=west,scale=1] (p3_lab) at (-1.5,-1.3) {$p_{3}$}; 

            \end{scope}    
        \end{tikzpicture}
    \caption{\textit{A Type II distribution that attains ${\mathbb{P}}(\text{at least }2\text{--out--of--}3\text{ events})=1$.}}	\label{fig_typeIIdistribution}
\end{subfigure}
\caption{}
\label{fig_exampledistributiontypes}
\end{figure}

\paragraph{\textbf{step 3:}}
\label{subsec_proof_stage3}
Choose an arbitrary ${\mathbb{P}}\in{\mathcal D}^{'}$. If $\mathbb{P}$ assigns non-zero probability only to stacks with at least $k$ overlaps and, possibly, to the event $\cap_{i=1}^{n}A_i^c$, then $\mathbb{P}$ does not need to be transformed further. However, if $\mathbb{P}$ assigns non-zero probability to an event involving only some $A_{n-k+1},\ldots,A_n$, then $\mathbb{P}$ can be transformed into one of two types of feasible distribution without reducing the objective function value (see Fig.\ \ref{fig_exampledistributiontypes}):
\begin{enumerate}[label=\bfseries\roman*)] 
\item \textbf{(Type I)} all stacks have $k$ layers, and either events $A_{n-r+1},\ldots,A_n$ (for some $1\leqslant r\leqslant k-1$) each have layers in all stacks or no events have layers in all stacks;
\item \textbf{(Type II)} all stacks have at least $k$ layers and $\cap_{i=1}^{n}A_i^c$ is a null event\footnote{A distribution where all stacks have $k$ layers and $\cap_{i=1}^{n}A_i^c$ is a null event is both Type I and Type II.}.
\end{enumerate}

To do this, identify the smallest $j\geqslant (n-k+2)$ such that there is an $A_j$ layer in a stack with less than $k$ overlaps, while there are stacks with at least $k$ overlaps that do not include an $A_j$ layer. Then, some part of the $A_j$ layer can be reassigned, from the ``less-than-$k$'' stack to any ``at least $k$'' stacks that do not involve an $A_j$ layer. If possible, perform similar reassignments of layers for all other ``$A$'' layers in the ``less-than-$k$'' stack. The result of these reassignments is that the only ``$A$'' layers remaining in the ``less-than-$k$'' stack will be for events that now have layers in all stacks with at least $k$ layers. If the distribution has $p_{0...0}>0$, then split any stack with greater than $k$ layers into stacks of $k$ layers and repeat these reassignment steps. After all of these recursive reassignments, if $A_j$ has layers in all stacks with at least $k$ layers, then $A_{j+1}$ must also have layers in all such stacks, since $p_j<p_{j+1}$ implies the ``coverage'' of the layers for $A_j$ must be smaller than the ``coverage'' of layers for $A_{j+1}$. Also, if there remain stacks with less than $k$ layers, the other stacks must have exactly $k$ layers since no more splits can be performed despite $p_{0...0}>0$; because, $A_n$ must be part of all compound events with non-zero probability, so $p_n$ must be an upper bound on the objective function value and $p_{0...0}=1-p_n>0$ (since $p_n<1$). Consequently, restrict \eqref{eqn_optprob} to the set ${\mathcal D}^{''}$ of these Type I and II distributions.    

\paragraph{\textbf{step 4:}}
\label{subsec_proof_stage4}
Let $\Phi=\mathbb{P}(\cup_{I\in{{\boldsymbol I}}:|I|\geqslant k}A_{I})=\sum_{I\in{\boldsymbol I}:|I|\geqslant k}p_I$ be the objective function value for an arbitrary ${\mathbb{P}}\in{\mathcal D}^{''}$. If $\mathbb{P}$ is Type I, all relevant stacks have exactly $k$ layers and $\Phi=\sum_{I\in{\boldsymbol I}:|I|=k}p_I$. Thus, for some $0\leqslant r\leqslant (k-1)$, all $A_i$ (where $i\leqslant n-r$) have layers that contribute only to some, not all, of the stacks with $k$ layers. Relatedly, either $r\geqslant 1$ and there are $r$ ``dominating'' events $A_{n-r+1},\ldots A_n$ that have layers in all $k$-layered stacks, or $r=0$ and no such events. Whichever the case, for all $r$, \[(k-r)\Phi=\!\!\sum_{I\in{\boldsymbol I}:|I|=k}\!\!\!(k-r)p_I=\sum_{I\in{\boldsymbol I}:|I|=k}\!\!\bigg(\sum_{1\leqslant i\leqslant n-r:A_I\subseteq A_i}\!\!p_I\bigg)=\sum_{1\leqslant i\leqslant n-r}\!\!\bigg(\sum_{I\in{\boldsymbol I}:A_I\subseteq A_i}\!\!p_I\bigg)\\=\sum_{i=1}^{n-r}p_i\,,\] since the sum $\sum_{i=1}^{n-r}p_i$ adds each non-zero $p_I$ (when $|I|= k$) a total of ``$k-r$'' times: one time for each layer in the stack for $p_I$, excluding the layers from dominating ``$A$'' events. We conclude, $\Phi=\sum_{I\in{\boldsymbol I}:|I|=k}p_I=\sum_{i=1}^{n-r}p_i/(k-r)$ for Type I distributions. If $\mathbb{P}$ is Type II instead, $\Phi=1$ and $\cap_{i=1}^{n}A_i^c$ is a null event (i.e. $p_{0...0}=0$, since all compound events with non-zero probability involve at least $k$ of the ``$A$''s).

The supremum \emph{must} be \eqref{eqn_soln} when $\sup_{{\mathcal D}^{''}}\Phi<1$. Otherwise, there exists $r\neq r^\ast$ such that $\sup_{{\mathcal D}^{''}}\Phi=\sum_{i=1}^{n-r}p_i/(k-r)$. Such an $r$ cannot exist, because:
\begin{enumerate}[label=\bfseries\roman*)]
\item if $0\leqslant r<r^\ast\!$, \ a Type I distribution must attain the bound. So,
\begin{enumerate}
\item if $r\geqslant 1$, the sets $A_{1}$, $\ldots, A_{n-r}$ each have layers in only some, not all, stacks with $k$ layers; in particular, we find $p_{n-r}<\sum_{i=1}^{n-r}p_i/(k-r)$, hence $p_{n-r}<\sum_{i=1}^{n-r-1}p_i/(k-r-1)$. By the ordering of the $p_i$ probabilities, $p_{n-r-1}<p_{n-r}<\sum_{i=1}^{n-r-1}p_i/(k-r-1)$, so that $p_{n-r-1}<\sum_{i=1}^{n-r-1}p_i/(k-r-1)$. Using this form of argument recursively gives the final inequality $p_{n-r^\ast+1}<\sum_{i=1}^{n-r^\ast+1}p_i/(k-r^\ast+1)$, which contradicts the definition of $r^\ast$; 
\item if $r=0$ there are no dominating events, and the supremum takes the form $\sum_{i=1}^{n}p_i/k$. Therefore, $p_{n}\leqslant \sum_{i=1}^{n}p_i/k$, which recursively implies $p_{n-r^\ast+1}< \sum_{i=1}^{n-r^\ast+1}p_i/(k-r^\ast+1)$ --- another contradiction of $r^\ast$'s definition. 
\end{enumerate}
\item if $0\leqslant r^\ast\!\!<r$, a Type I distribution attains the bound, the event $A_{n-r+1}$ is a dominating event, and $p_{n-r+1}\geqslant\sum_{i=1}^{n-r}p_i/(k-r)$. That is, $p_{n-r+1}\geqslant\sum_{i=1}^{n-r+1}p_i/(k-r+1)$, which contradicts $r^*$ as the largest $r$ satisfying such inequalities.
\end{enumerate}

Finally, the supremum must still be \eqref{eqn_soln} when $\sup_{{\mathcal D}^{''}}\Phi=1$. To see this, prove that $\sup_{{\mathcal D}^{''}}\Phi=1$ \emph{iff} $r^\ast$ satisfies $\min\{\sum_{i=1}^{n-r^\ast}p_i/(k-r^\ast),1\}=1$, as follows. 

Sufficiency: Suppose $\sup_{{\mathcal D}^{''}}\Phi=1$. This is attained by a Type II distribution: all stacks consist of $k$, or more, layers and $p_{0...0}=0$. Splitting those stacks with more than $k$ layers produces all stacks having exactly $k$ layers --- this gives $\sum_{i=1}^{n}p_i/k$ as an upper bound of $\Phi$, so this upper bound is greater than $\Phi$'s least upper bound, 1. Therefore, we must have $p_n<1<\sum_{i=1}^{n}p_i/k$. Rearranging, then using the ordering of the $p_i$ probabilities, gives $p_{n-1}<\sum_{i=1}^{n-1}p_i/(k-1)$. By applying this form of argument recursively, we deduce none of $p_n,\ldots,p_{n-k+2}$ satisfy the inequality for the ``$\max$'' in the definition of $r^\ast$. Hence, $r^\ast=0$ and $\sum_{i=1}^{n-r^\ast}p_i/(k-r^\ast)=\sum_{i=1}^{n}p_i/k$. Consequently, \eqref{eqn_soln} correctly states the supremum as  $\sup_{{\mathcal D}^{''}}\Phi=\min\{\sum_{i=1}^{n}p_i/k,1\}=1$. 

Necessity: Suppose $\sum_{i=1}^{n-r^\ast}p_i/(k-r^\ast)>1$ instead. The definition of $r^\ast$ implies $r^\ast=0$; otherwise, $p_{n-r^\ast+1}\geqslant \sum_{i=1}^{n-r^\ast}p_i/(k-r^\ast)>1$. It follows that $\sum_{i=1}^{n}p_i/k>1$. This inequality means $\sum_{i=1}^{n}p_i/k$ cannot be attained by any feasible distribution. Moreover, since both $r^\ast=0$ and this inequality hold, the closest $\Phi$ value to $\sum_{i=1}^{n}p_i/k$ must be $1$, from a Type II distribution consisting of only stacks with at least $k$ layers and $p_{0...0}=0$. Hence, $\sup_{{\mathcal D}^{''}}\Phi=1$.\qedhere    



\end{proof}

\begin{corollary}
\label{cor_inf_1}
Let $k\in\{0,\ldots,n-1\}$. Then,
\begin{align}
\label{eqn_optprob_inf_1}
&\inf\limits_{\mathbb{P}} \mathbb{P}(\text{at most }k\text{--out--of--}n\text{ events}) \nonumber\\
\text{s.t.}\,&\,\, \mathbb{P}(A_1)=p_1,\ldots,\mathbb{P}(A_n)=p_n
\end{align}
has the solution 
\begin{align}
\label{eqn_soln_inf_1}
\max\!\left\{1-\sum_{i=1}^{n-r^\ast}\frac{p_i}{k+1-r^\ast},\,0\right\}
\end{align}
where 
$$r^\ast:=\left\{\begin{array}{ll}\max\!\left\{r\in\{1,\ldots,k\}\,\,\middle|\,\, p_{n-r+1} \geqslant \sum\limits_{i=1}^{n-r+1}\left(\frac{p_i}{k-r+2}\right)\right\}\!,&\text{if max exists} \\ 0,&\text{otherwise}\end{array}\right.$$
\end{corollary}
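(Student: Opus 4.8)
The plan is to obtain the corollary directly from Theorem~\ref{thrm_sup} by complementation, exploiting that the events ``at most $k$-out-of-$n$'' and ``at least $(k+1)$-out-of-$n$'' partition the sample space. First I would record the set identity that ``at most $k$ of the $A_i$ occur'' is the complement of ``at least $(k+1)$ of the $A_i$ occur.'' Taking probabilities gives, for every feasible $\mathbb{P}$,
$$\mathbb{P}(\text{at most }k\text{-out-of-}n) = 1 - \mathbb{P}(\text{at least }(k+1)\text{-out-of-}n).$$
Since $1-(\cdot)$ is an order-reversing affine map and the feasible set (distributions with $\mathbb{P}(A_i)=p_i$) is identical for both objectives, the infimum of the left-hand side equals $1$ minus the supremum of the right-hand side.

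Next I would invoke Theorem~\ref{thrm_sup} with $k$ replaced by $k+1$. This substitution is legitimate because $k\in\{0,\ldots,n-1\}$ forces $k+1\in\{1,\ldots,n\}$, exactly the admissible range in the theorem, which then yields
$$\sup_{\mathbb{P}} \mathbb{P}(\text{at least }(k+1)\text{-out-of-}n) = \min\!\left\{\sum_{i=1}^{n-r^\ast}\frac{p_i}{(k+1)-r^\ast},\,1\right\}.$$
The one point requiring care is confirming that the threshold index $r^\ast$ produced by the theorem under $k\mapsto k+1$ coincides with the $r^\ast$ declared in the corollary. Substituting $k+1$ for $k$ in the theorem's definition turns the range $\{1,\ldots,k-1\}$ into $\{1,\ldots,k\}$ and the denominator $k-r+1$ into $(k+1)-r+1=k-r+2$, so the selection rule becomes $\max\{r\in\{1,\ldots,k\}\mid p_{n-r+1}\geqslant \sum_{i=1}^{n-r+1} p_i/(k-r+2)\}$, which is verbatim the corollary's $r^\ast$.

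With the indices matched, it remains only to rewrite $1-\sup$ in the claimed form: applying the elementary identity $1-\min\{a,1\}=\max\{1-a,\,0\}$ with $a=\sum_{i=1}^{n-r^\ast} p_i/(k+1-r^\ast)$ converts the bound into $\max\{1-\sum_{i=1}^{n-r^\ast} p_i/(k+1-r^\ast),\,0\}$, which is precisely \eqref{eqn_soln_inf_1}. The main (and essentially only) obstacle is the bookkeeping in the index-matching step; everything else is a one-line consequence of the theorem. I would also note in passing that this complementation argument sidesteps re-running the four-step construction, since the attaining distributions for the supremum problem transfer automatically to witnesses for the infimum.
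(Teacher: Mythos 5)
Your proposal is correct and is essentially the paper's own proof: the paper also derives the corollary from the single identity $\inf_{\mathbb{P}}\mathbb{P}(\text{at most }k\text{--out--of--}n)=1-\sup_{\mathbb{P}}\mathbb{P}(\text{at least }(k+1)\text{--out--of--}n)$ and an appeal to Theorem~\ref{thrm_sup} with $k\mapsto k+1$. Your additional bookkeeping (verifying the admissible range of $k+1$, matching the $r^\ast$ definitions, and the identity $1-\min\{a,1\}=\max\{1-a,0\}$) simply makes explicit what the paper leaves implicit.
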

\begin{proof}
$$\inf\limits_{\mathbb{P}} \mathbb{P}(\text{at most }k\text{--out--of--}n\text{ events})=1-\sup\limits_{\mathbb{P}} \mathbb{P}(\text{at least }(k+1)\text{--out--of--}n\text{ events})\qedhere$$
\end{proof}

\begin{corollary}
\label{cor_inf_2}
Let $k\in\{1,\ldots,n\}$. Then,
\begin{align}
\label{eqn_optprob_inf_2}
&\inf\limits_{\mathbb{P}} \mathbb{P}(\text{at least }k\text{--out--of--}n\text{ events}) \nonumber\\
\text{s.t.}\,&\,\, \mathbb{P}(A_1)=p_1,\ldots,\mathbb{P}(A_n)=p_n
\end{align}
has the solution 
\begin{align}
\label{eqn_soln_inf_2}
\max\!\left\{1-\!\!\!\sum_{i=r^\ast+1}^{n}\frac{1-p_i}{n-k+1-r^\ast}\,\,,\,0\right\}
\end{align}
where 
$$r^\ast:=\left\{\!\!\begin{array}{ll}\max\!\left\{r\in\{1,\ldots,n-k\}\,\,\middle|\,\, (1-p_{r}) \geqslant \sum\limits_{i=r}^{n}\left(\frac{1-p_i}{n-k-r+2}\right)\right\}\!,&\text{if max exists} \\ 0,&\text{otherwise}\end{array}\right.$$
\end{corollary}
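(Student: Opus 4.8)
The plan is to reduce the statement to Theorem~\ref{thrm_sup} by a complementation (de Morgan) argument, exactly as Corollary~\ref{cor_inf_1} was obtained, but now passing to the complementary events $A_i^c$. The governing identity is that ``at least $k$ of the $A_i$ occur'' is the complement of ``at least $n-k+1$ of the $A_i^c$ occur'': if fewer than $k$ of the $A_i$ occur then more than $n-k$, i.e.\ at least $n-k+1$, of the $A_i^c$ occur, and conversely. Taking probabilities and then infima,
\[
\inf_{\mathbb{P}}\mathbb{P}(\text{at least }k\text{--out--of--}n)\;=\;1-\sup_{\mathbb{P}}\mathbb{P}(\text{at least }(n-k+1)\text{ of the events }A_1^c,\ldots,A_n^c),
\]
with the same marginal constraints, since fixing $\mathbb{P}(A_i)=p_i$ is the same as fixing $\mathbb{P}(A_i^c)=1-p_i$.

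First I would set up the complementary problem so that Theorem~\ref{thrm_sup} applies verbatim. Since $0<p_1<\cdots<p_n<1$ forces $0<1-p_n<\cdots<1-p_1<1$, the complements must be relabelled in increasing order of probability: put $C_j:=A_{n+1-j}^c$, so that $q_j:=\mathbb{P}(C_j)=1-p_{n+1-j}$ satisfies $0<q_1<\cdots<q_n<1$. The number of the $C_j$ that occur equals the number of the $A_i^c$ that occur, so the supremum above is precisely the ``at least $(n-k+1)$--out--of--$n$'' supremum for the events $C_1,\ldots,C_n$ with marginals $q_1,\ldots,q_n$. Because $k\in\{1,\ldots,n\}$, the threshold $n-k+1$ lies in $\{1,\ldots,n\}$, so the hypotheses of Theorem~\ref{thrm_sup} are met.

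Finally I would apply Theorem~\ref{thrm_sup} to the $C_j$ with threshold $n-k+1$ and translate back. Writing $s^\ast$ for the theorem's cutoff in the $q$-problem and substituting $q_i=1-p_{n+1-i}$ with the reindexing $j=n+1-i$: the sum $\sum_{i=1}^{n-s^\ast}q_i$ becomes $\sum_{j=s^\ast+1}^{n}(1-p_j)$, and the denominator $(n-k+1)-s^\ast$ becomes $n-k+1-s^\ast$; likewise the theorem's defining inequality $q_{n-s+1}\geqslant\sum_{i=1}^{n-s+1}q_i/(n-k-s+2)$ becomes $1-p_s\geqslant\sum_{j=s}^{n}(1-p_j)/(n-k-s+2)$ over the range $s\in\{1,\ldots,n-k\}$. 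These match exactly the expression in \eqref{eqn_soln_inf_2} and the defining condition for $r^\ast$, so $s^\ast=r^\ast$, and $1-\min\{\,\cdot\,,1\}=\max\{1-\,\cdot\,,0\}$ yields the claimed formula. The only real work is this change-of-index bookkeeping~---~checking that the reindexed summation limits, the denominators, and the admissible range of the maximizing index all line up~---~which is routine but is where an error would most easily creep in.
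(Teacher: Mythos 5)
Your proof is correct and takes essentially the same route as the paper: a complementation/de~Morgan reduction of the infimum to a supremum over the complementary events, followed by the substitution $p_i\mapsto 1-p_{n+1-i}$. The only cosmetic difference is that the paper passes through Corollary~\ref{cor_inf_1} (``at least $k$ of the $A_i$'' $=$ ``at most $n-k$ of the $A_i^c$'') while you apply Theorem~\ref{thrm_sup} directly with threshold $n-k+1$; these compose to the identical reindexing, and your bookkeeping of the summation limits, denominators, and the range $\{1,\ldots,n-k\}$ of the maximizing index all checks out.
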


\begin{proof}
$$\inf\limits_{\mathbb{P}} \mathbb{P}(\text{at most }(n-k)\text{--out--of--}n\text{ do not occur})=\inf\limits_{\mathbb{P}} \mathbb{P}(\text{at least }k\text{--out--of--}n\text{ events})$$
Thus, in \eqref{eqn_optprob_inf_1} and \eqref{eqn_soln_inf_1}, substitute $(1-p_{n-i+1})$ for $p_i$, and $(n-k)$ for $k$.\qedhere
\end{proof}
\begin{corollary}
\label{cor_sup_2}
Let $k\in\{0,\ldots,n-1\}$. Then,
\begin{align}
\label{eqn_optprob_sup_2}
&\sup\limits_{\mathbb{P}} \mathbb{P}(\text{at most }k\text{--out--of--}n\text{ events}) \nonumber\\
\text{s.t.}\,&\,\, \mathbb{P}(A_1)=p_1,\ldots,\mathbb{P}(A_n)=p_n
\end{align}
has the solution 
\begin{align}
\label{eqn_soln_sup_2}
\min\!\left\{\sum_{i=r^\ast+1}^{n}\frac{1-p_i}{n-k-r^\ast}\,\,,\,1\right\}
\end{align}
where 
$$r^\ast:=\left\{\begin{array}{ll}\max\!\left\{r\in\{1,\ldots,n-k-1\}\,\,\middle|\,\, 1-p_{r} \geqslant \sum\limits_{i=r}^{n}\!\left(\frac{1-p_i}{n-k-r+1}\right)\right\}\!,&\text{if max exists} \\ 0,&\text{otherwise}\end{array}\right.$$
\end{corollary}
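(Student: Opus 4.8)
The plan is to reduce this supremum to the infimum already evaluated in Corollary~\ref{cor_inf_2}, mirroring the way Corollary~\ref{cor_inf_1} was obtained from Theorem~\ref{thrm_sup}. The key observation is a complementation identity for the objective itself: the event ``at most $k$--out--of--$n$ events occur'' is the complement of ``at least $(k+1)$--out--of--$n$ events occur'', so $\mathbb{P}(\text{at most }k)=1-\mathbb{P}(\text{at least }(k+1))$ for every feasible $\mathbb{P}$, whence
\begin{align*}
\sup_{\mathbb{P}}\mathbb{P}(\text{at most }k\text{--out--of--}n)=1-\inf_{\mathbb{P}}\mathbb{P}(\text{at least }(k+1)\text{--out--of--}n).
\end{align*}
Since $k\in\{0,\ldots,n-1\}$ forces $k+1\in\{1,\ldots,n\}$, the right-hand infimum is precisely the quantity computed by Corollary~\ref{cor_inf_2} with threshold $k+1$.

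First I would substitute $k+1$ for $k$ throughout the statement of Corollary~\ref{cor_inf_2}. In the solution \eqref{eqn_soln_inf_2}, the denominator $n-k+1-r^\ast$ becomes $n-(k+1)+1-r^\ast=n-k-r^\ast$; in the definition of $r^\ast$, the admissible index set $\{1,\ldots,n-k\}$ becomes $\{1,\ldots,n-k-1\}$, and the denominator $n-k-r+2$ inside the defining inequality becomes $n-k-r+1$, while the left-hand side $(1-p_r)$ is unaffected. A direct comparison then shows that the $r^\ast$ produced by this substitution coincides \emph{character-for-character} with the $r^\ast$ in the statement of Corollary~\ref{cor_sup_2}, so no reindexing of the $p_i$ is required; this is the structural reason the two statements are dual.

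It then remains only to push the outer $1-(\cdot)$ through the outer maximum. Applying $1-\max\{a,0\}=\min\{1-a,1\}$ with $a=1-\sum_{i=r^\ast+1}^{n}(1-p_i)/(n-k-r^\ast)$ gives $1-a=\sum_{i=r^\ast+1}^{n}(1-p_i)/(n-k-r^\ast)$, which is exactly the expression \eqref{eqn_soln_sup_2}. This closes the chain of equalities and identifies the supremum with $\min\{\sum_{i=r^\ast+1}^{n}(1-p_i)/(n-k-r^\ast),\,1\}$.

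I expect the only genuine obstacle to be bookkeeping rather than mathematical substance: one must verify carefully that, after setting the threshold to $k+1$, both the admissible range of $r$ and the defining inequality transform into exactly those appearing in Corollary~\ref{cor_sup_2}, so that the same extremal index is selected. As an alternative single-step route, I would note that ``at most $k$ of the $A_i$ occur'' is the same event as ``at least $(n-k)$ of the complements $A_i^c$ occur''; applying Theorem~\ref{thrm_sup} directly to the events $A_{n-i+1}^c$, whose probabilities $1-p_{n-i+1}$ satisfy the required strict ordering $0<1-p_n<\cdots<1-p_1<1$, reproduces the same formula after the change of index $j=n-i+1$. Either way, all the content is carried by the duality between a threshold event and its complement together with the previously established bounds.
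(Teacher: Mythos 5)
Your proposal is correct and follows essentially the same route as the paper: the paper's one-line proof is exactly the complementation identity $\sup_{\mathbb{P}}\mathbb{P}(\text{at most }k) = 1-\inf_{\mathbb{P}}\mathbb{P}(\text{at least }(k+1))$ combined with Corollary~\ref{cor_inf_2} at threshold $k+1$ (the paper merely inserts the intermediate rephrasing via ``at least $(n-k)$ do not occur'', which you also note as your alternative single-step route). Your bookkeeping of the substitution and the $1-\max\{a,0\}=\min\{1-a,1\}$ step is accurate.
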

\begin{proof}
\begin{align*}
\sup\limits_{\mathbb{P}} \mathbb{P}(\text{at most }k\text{--out--of--}n\text{ events})=&\,\sup\limits_{\mathbb{P}} \mathbb{P}(\text{at least }(n-k)\text{--out--of--}n\text{ do not occur})\\
=&\,1-\inf\limits_{\mathbb{P}} \mathbb{P}(\text{at least }(k+1)\text{--out--of--}n\text{ events})\,\qedhere
\end{align*}
\end{proof}

\subsection{General Construction of Sharp Bounds: A Dynamic Programming Problem.}
\label{subsec_DPformulation}
The constructive approach of section~\ref{subsec_constructedkoutofnbounds} can be generalized as policy improvements for the following \emph{deterministic}, \emph{finite-horizon}, \emph{undiscounted} dynamic programming (DP) problem. 
As before, stacks are placed over the unit interval $[0,1]$. A vertical cut of the unit interval starts at $t=1$ and moves leftward to $t=0$. When the cut is at any $t\in(0,1)$ it separates the interval into two segments: {\textbf{i)}} a \emph{left} segment $L(t)$ of length $t$ with stacks that are yet to be transformed (if needed), and {\textbf{ii)}} a \emph{right} segment $R(t)$ of length $1-t$ that has stacks constructed out of stacks from earlier $L(t)$ segments. So, as $t$ moves, stacks over $L(t)$ are transformed into stacks over $R(t)$. At cut $t$, the portion of stack $I$ over $R(t)$ is denoted $z_I$~---~the $I$-th component of a vector ${\boldsymbol z}\in \mathbb{R}^{|\boldsymbol I|}_{\geqslant 0}$, where $|\boldsymbol I|=2^n$. 

\paragraph{State variables.}
At any cut $t$, there is a vector ${\boldsymbol l}$ with $i$-th component $l_i$ that is the left amount of $p_i$ over the segment $L(t)$. Thus, $l_i$ is the \emph{Lebesgue measure} within $L(t)$ on which $A_i=1$, $l_i\leqslant p_i$, and the amount $p_i-l_i$ lies over $R(t)$. The \emph{state} at cut $t$ is a triplet, $(t,{\boldsymbol l}, {\boldsymbol z})$, from the \emph{state-space}
\[
\mathcal S:=\bigl\{(t,{\boldsymbol l}, {\boldsymbol z}):\, t\in[0,1],\ \ \sum_{I\in{\boldsymbol I}}z_I=1-t,\ \ l_i\in[0,\min\{t,p_i\}]\text{ for }i=1,\dots,n,\ {\boldsymbol l}+M{\boldsymbol z}={\boldsymbol p})\bigr\},
\]
where $t$ is the \emph{length} of $L(t)$ and $M\in\{0,1\}^{n\times|\boldsymbol I|}$ is a matrix whose columns are the binary words $I\in\boldsymbol I$. The initial state is $(t,{\boldsymbol l},{\boldsymbol z})=(1,{\boldsymbol p},{\boldsymbol 0})$, where the $p_i$ are the components of ${\boldsymbol p}$.

\paragraph{Policies.}
A \emph{policy} $\pi$ is a collection of deterministic transformation rules for stacks~---~one rule per state~---~that map $\mathcal S$ to $\mathcal S$ while satisfying the following feasibility requirements. 

At cut $t$, the rule is defined for any slice from $L(t)$ of length $\Delta=l_i$ for some $l_i>0$, otherwise of length $\Delta=t$ if there are no $l_i>0$. The slice defines a vector, ${\boldsymbol x}(t)\in \mathbb{R}^{|\boldsymbol I|}_{\geqslant 0}$, whose $I$-th component, $x_{I}(t)$, is the amount of stack $I$ lying over the $\Delta$ slice; so, $\sum_{I\in\boldsymbol I} x_{I}(t)=\Delta$. The component $u_i$ of the vector ${\boldsymbol u}(t):=M{\boldsymbol x}(t)$ is the amount of $p_i$ over the $\Delta$ slice. The rule transforms the ${\boldsymbol x}(t)$ stacks over the $\Delta$ slice in $L(t)$ to ${\boldsymbol y}(t)$ stacks over $R(t-\Delta)$, where: {\textbf {i)}} ${\boldsymbol y}(t)\in \mathbb{R}^{|\boldsymbol I|}_{\geqslant 0}$, {\textbf{ii)}} $\sum_{I\in\boldsymbol I} y_{I}(t)=\Delta$, and {\textbf{iii)}} $M{\boldsymbol x}(t)={\boldsymbol u}(t)=M{\boldsymbol y}(t)$. Under the rule, $(t,{\boldsymbol l}, {\boldsymbol z})\mapsto (t-\Delta,{\boldsymbol l}-{\boldsymbol u}(t),{\boldsymbol y}(t)+{\boldsymbol z})$. From the definitions of $M$, ${\boldsymbol x}(t)$, ${\boldsymbol y}(t)$, and ${\boldsymbol z}$, it follows that $\max\{{\boldsymbol 0}, {\boldsymbol l}-(t-\Delta){\boldsymbol 1}\}\leqslant {\boldsymbol u}(t)\leqslant {\boldsymbol l}$ and ${\boldsymbol y}(t)+{\boldsymbol z}\leqslant (1-t+\Delta){\boldsymbol 1}$ componentwise (where $\boldsymbol{1}$, $\boldsymbol{0}$, are vectors of ones and zeroes respectively, of appropriate dimension). In summary, the triplet $(\Delta, {\boldsymbol x}(t), {\boldsymbol y}(t))$ is the transformation rule or \emph{action} taken in state $(t,{\boldsymbol l},{\boldsymbol z})$ by following policy $\pi$. 



\paragraph{Rewards.}
 A compound event $E$ is specified by a non-empty subset $E\subseteq \boldsymbol{I}$. Suppose we seek $\Phi^*=\sup\mathbb P(E)$ subject to the marginal $\boldsymbol p$ constraints. The \emph{reward}, obtained by following policy $\pi$ at vertical cut $t$, is $e({\boldsymbol y}):=\sum_{I\in E} y_I$; this is the marginal increase in $\mathbb P(E)$ on $R(t-\Delta)$ due to transforming stacks over $L(t)$ by following policy $\pi$. The total reward at vertical cut $t$ is the sum of all stack probabilities over $R(t)$ for those stacks indexed in $E$;  stacks over $L(t)$ do not contribute to the total reward.

\paragraph{Bellman structure.}
Define the Bellman operator $\mathcal{T}$ on bounded $W:\mathcal{S}\to[0,1]$,
\begin{equation}
\label{eq:Bellman-operator}
(\mathcal{T}W)(t,{\boldsymbol l},{\boldsymbol z}):=\sup_{\substack{(\Delta,{\boldsymbol x}(t), {\boldsymbol y}(t))}}
\Bigl\{\, e({\boldsymbol y(t)})\ +\ W\bigl(t-\Delta,\ {\boldsymbol l}-{\boldsymbol u}(t),\ {\boldsymbol y}(t)+{\boldsymbol z}\bigr)\,\Bigr\},
\qquad W(0,{\boldsymbol 0},{\boldsymbol z}):=0 .
\end{equation}




This Bellman operator has a fixed-point that is the solution (on $\mathcal S$) of the statewise Linear Programming (LP) problem \eqref{eqn:statewiseLP}.  
\begin{lemma}[Statewise LP problem]
\label{lem:statewise-Phi}
For each $(t,{\boldsymbol l},{\boldsymbol z})\in\mathcal{S}$, define
\begin{equation}
\Phi(t,{\boldsymbol l},{\boldsymbol z}):=\max\Bigl\{\ \sum_{I\in E} y_I\ :\ {\boldsymbol y}\in\mathbb{R}^{|\boldsymbol I|}_{\geqslant 0},\ \sum_{I\in{\boldsymbol I}} y_I=t,\ \ M{\boldsymbol y}={\boldsymbol l}\ \Bigr\}.
\label{eqn:statewiseLP}
\end{equation}
Then $\Phi(1,{\boldsymbol p},{\boldsymbol 0})=\Phi^*$, and $\Phi(t,{\boldsymbol l},{\boldsymbol z})$ is well-defined and attained for all $(t,{\boldsymbol l},{\boldsymbol z})\in\mathcal{S}$.
\end{lemma}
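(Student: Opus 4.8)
The plan is to prove the two assertions separately: first that the linear program defining $\Phi(t,\boldsymbol{l},\boldsymbol{z})$ is feasible, bounded, and attained for every $(t,\boldsymbol{l},\boldsymbol{z})\in\mathcal{S}$ (so that $\Phi$ is well-defined), and then that evaluating it at the initial state recovers $\Phi^{*}$. A preliminary observation simplifies matters: the vector $\boldsymbol{z}$ does not appear in any constraint of \eqref{eqn:statewiseLP}, so $\Phi$ depends only on $(t,\boldsymbol{l})$, and the only facts about the state that I will use are $t\in[0,1]$ and $0\leqslant l_i\leqslant\min\{t,p_i\}$ for each $i$.

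First I would establish feasibility by an explicit construction. Given $(t,\boldsymbol{l},\boldsymbol{z})\in\mathcal{S}$, place over the interval $[0,t]$ the nested sets $S_i:=[0,l_i]$, which is legitimate precisely because $l_i\leqslant t$. At each point $s\in[0,t]$ read off the binary word $I(s)$ whose $i$-th bit equals $1$ exactly when $s<l_i$, and set $y_I$ equal to the Lebesgue measure of $\{s\in[0,t]:I(s)=I\}$. By construction $\boldsymbol{y}\geqslant\boldsymbol{0}$, the total mass is $\sum_{I}y_I=t$, and the coverage of each event is $\sum_{I:A_I\subseteq A_i}y_I=\mathrm{Leb}(S_i)=l_i$, i.e. $M\boldsymbol{y}=\boldsymbol{l}$; hence the feasible region is nonempty. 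Boundedness is immediate since $0\leqslant\sum_{I\in E}y_I\leqslant\sum_{I}y_I=t\leqslant 1$. For attainment I would note that the feasible region is the intersection of the nonnegative orthant with the affine constraints $\sum_{I}y_I=t$ and $M\boldsymbol{y}=\boldsymbol{l}$, hence a closed set contained in the box $[0,t]^{|\boldsymbol{I}|}$ and therefore compact; a linear objective on a nonempty compact set attains its maximum, so $\Phi(t,\boldsymbol{l},\boldsymbol{z})$ is well-defined and the maximum is realized.

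For the identification $\Phi(1,\boldsymbol{p},\boldsymbol{0})=\Phi^{*}$, I would exhibit the correspondence between feasible points of \eqref{eqn:statewiseLP} at $(1,\boldsymbol{p},\boldsymbol{0})$ and feasible distributions. At this state the constraints read $\boldsymbol{y}\geqslant\boldsymbol{0}$, $\sum_I y_I=1$, and $M\boldsymbol{y}=\boldsymbol{p}$; identifying $y_I$ with the atomic probability $p_I=\mathbb{P}(A_I)$, these say exactly that $\boldsymbol{y}$ is the atom-probability vector of a distribution with $\mathbb{P}(A_i)=\sum_{I:A_I\subseteq A_i}p_I=p_i$, and every such distribution arises this way. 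Under this correspondence the objective becomes $\sum_{I\in E}y_I=\sum_{I\in E}p_I=\mathbb{P}(E)$, so the program maximizes $\mathbb{P}(E)$ over the feasible distributions; its value is therefore $\Phi^{*}=\sup_{\mathbb{P}}\mathbb{P}(E)$, and since the maximum is attained the supremum is in fact achieved.

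The steps are all routine; the one point that must be handled with care is feasibility for \emph{every} state of $\mathcal{S}$, rather than only for the states reachable under some policy. This is exactly where the state-space bound $l_i\leqslant\min\{t,p_i\}$ earns its keep: the staircase construction needs only $l_i\leqslant t$ to fit each $S_i$ inside $[0,t]$, so no compatibility condition among the $l_i$ is required. I do not anticipate a genuine obstacle beyond making this construction and the distribution/atom correspondence precise.
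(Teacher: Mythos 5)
Your proposal is correct and follows essentially the same route as the paper: an explicit layer placement over the left segment to establish feasibility (your nested staircase $S_i=[0,l_i]$ is a concrete instance of the paper's ``place all of the $l_i$-length layers over $L(t)$''), compactness of the feasible polytope plus linearity of the objective for attainment, and the identification of feasible points of the LP at $(1,\boldsymbol p,\boldsymbol 0)$ with atom-probability vectors of distributions having the prescribed marginals. Your write-up is somewhat more explicit than the paper's, particularly in spelling out the distribution/atom correspondence, but there is no substantive difference in approach.
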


\begin{proof}
Feasibility: This LP problem does not depend on the particular ${\boldsymbol z}$~---~any ${\boldsymbol z}$ consistent with ${\boldsymbol l}$ defines the same LP. Since $(t,{\boldsymbol l}, {\boldsymbol z})\in\mathcal{S}$, the ${\boldsymbol l}$ and ${\boldsymbol z}$ vectors are consistent with the ${\boldsymbol p}$ constraints. Split the ``$A_i$'' layer into two layers with lengths $l_i$ and $p_i-l_{i}$. Place all of the $l_i$-length layers over $L(t)$, with the $p_i-l_i$ layers over $R(t)$ contributing to ${\boldsymbol z}$ stacks. Set $y_I$ to be the length of stack $I$ over $L(t)$. Then, $\boldsymbol y$ satisfies the constraints of \eqref{eqn:statewiseLP}.

 Attainment: Compactness of the set of feasible ${\boldsymbol y}$ in \eqref{eqn:statewiseLP} and the linearity (continuity, in particular) of the objective $\sum_{I\in E}y_I$ imply attainment. Taking $(t,{\boldsymbol l}, {\boldsymbol z})=(1,{\boldsymbol p},{\boldsymbol 0})$ the solution to the LP is a stack arrangement over $[0,1]$ that maximizes the total horizontal length of stacks representing the occurrence of event $E$. This is precisely $\Phi^*$.
\end{proof}

\begin{lemma}[Existence of Bellman-operator fixed point]
\label{lem:decomposition}
For all $(t,{\boldsymbol l},{\boldsymbol z})\in\mathcal{S}$,
\[
\Phi(t,{\boldsymbol l},{\boldsymbol z})=\sup_{\substack{(\Delta,{\boldsymbol x}(t)},{\boldsymbol y}(t))} \Bigl\{\, e({\boldsymbol y}(t))\ +\ \Phi\bigl(t-\Delta,\ {\boldsymbol l}-M{\boldsymbol y}(t),\ {\boldsymbol y}(t)+{\boldsymbol z}\bigr)\,\Bigr\}.
\]
Equivalently, $\Phi$ is a fixed point of the Bellman operator: $\Phi=\mathcal{T}\Phi$ on $\mathcal{S}$.
\end{lemma}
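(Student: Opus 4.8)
The plan is to prove the two inequalities $\Phi\geqslant\mathcal T\Phi$ and $\Phi\leqslant\mathcal T\Phi$ separately, leaning on two facts already established in Lemma~\ref{lem:statewise-Phi}: the statewise LP value $\Phi(t,{\boldsymbol l},{\boldsymbol z})$ depends only on $(t,{\boldsymbol l})$ and not on ${\boldsymbol z}$, and the maximum is always attained. I will also use repeatedly that the reward $e({\boldsymbol y})=\sum_{I\in E}y_I$ is linear, hence additive when feasible solutions are concatenated. Throughout, feasibility of an action $(\Delta,{\boldsymbol x}(t),{\boldsymbol y}(t))$ already guarantees its successor $(t-\Delta,\,{\boldsymbol l}-M{\boldsymbol y}(t),\,{\boldsymbol y}(t)+{\boldsymbol z})$ lies in $\mathcal S$, where I used $M{\boldsymbol y}(t)={\boldsymbol u}(t)$.

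First I would establish $\Phi\geqslant\mathcal T\Phi$ by gluing feasible solutions. Fix any feasible action $(\Delta,{\boldsymbol x}(t),{\boldsymbol y}(t))$ in state $(t,{\boldsymbol l},{\boldsymbol z})$; by Lemma~\ref{lem:statewise-Phi} there is an optimal ${\boldsymbol y}'$ for the successor LP, i.e.\ ${\boldsymbol y}'\geqslant{\boldsymbol 0}$, $\sum_I y_I'=t-\Delta$, $M{\boldsymbol y}'={\boldsymbol l}-M{\boldsymbol y}(t)$, and $e({\boldsymbol y}')=\Phi(\text{successor})$. Then ${\boldsymbol y}(t)+{\boldsymbol y}'$ is feasible for the LP at $(t,{\boldsymbol l})$: it is nonnegative, sums to $\Delta+(t-\Delta)=t$, and $M({\boldsymbol y}(t)+{\boldsymbol y}')=M{\boldsymbol y}(t)+({\boldsymbol l}-M{\boldsymbol y}(t))={\boldsymbol l}$. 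Optimality of $\Phi$ together with additivity of $e$ gives $\Phi(t,{\boldsymbol l})\geqslant e({\boldsymbol y}(t))+e({\boldsymbol y}')=e({\boldsymbol y}(t))+\Phi(\text{successor})$, and taking the supremum over actions yields $\Phi\geqslant\mathcal T\Phi$.

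For the reverse inequality $\Phi\leqslant\mathcal T\Phi$ --- which I expect to be the main obstacle, since it is the ``principle of optimality'' step and requires exhibiting a single legal action that recovers the value --- I would start from an optimal ${\boldsymbol y}^\ast$ for the LP at $(t,{\boldsymbol l})$ (attained by Lemma~\ref{lem:statewise-Phi}) and peel off a proportional slice. Choose an admissible slice length $\Delta$ (some $l_i>0$, or $\Delta=t$ when ${\boldsymbol l}={\boldsymbol 0}$; in every case $\Delta\leqslant t$ because $l_i\leqslant\min\{t,p_i\}$), and set ${\boldsymbol y}(t)={\boldsymbol x}(t)=\tfrac{\Delta}{t}{\boldsymbol y}^\ast$ and ${\boldsymbol y}'=\tfrac{t-\Delta}{t}{\boldsymbol y}^\ast$. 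The crux is to verify this is a legal action with successor in $\mathcal S$: I would check $\sum_I y_I(t)=\Delta$, $M{\boldsymbol x}(t)=M{\boldsymbol y}(t)$, and the derived bounds $\max\{{\boldsymbol 0},{\boldsymbol l}-(t-\Delta){\boldsymbol 1}\}\leqslant M{\boldsymbol y}(t)=\tfrac{\Delta}{t}{\boldsymbol l}\leqslant{\boldsymbol l}$ and ${\boldsymbol y}(t)+{\boldsymbol z}\leqslant(1-t+\Delta){\boldsymbol 1}$, each of which collapses to $l_i\leqslant t$, $\Delta\leqslant t$, and $\sum_I z_I=1-t$; likewise the successor marginals $\tfrac{t-\Delta}{t}{\boldsymbol l}$ satisfy $l_i'\leqslant\min\{t-\Delta,p_i\}$, placing the successor in $\mathcal S$. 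Since ${\boldsymbol y}'$ is feasible for the successor LP, $\Phi(\text{successor})\geqslant e({\boldsymbol y}')$, and additivity gives $e({\boldsymbol y}(t))+\Phi(\text{successor})\geqslant e({\boldsymbol y}(t))+e({\boldsymbol y}')=e({\boldsymbol y}^\ast)=\Phi(t,{\boldsymbol l})$. As this action is among those in the Bellman supremum, $(\mathcal T\Phi)(t,{\boldsymbol l},{\boldsymbol z})\geqslant\Phi(t,{\boldsymbol l},{\boldsymbol z})$.

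Combining the two inequalities gives $\Phi=\mathcal T\Phi$ on $\mathcal S$, with the terminal states $t=0$ (which force ${\boldsymbol l}={\boldsymbol 0}$ and $\Phi=0$) treated separately and consistently with the boundary value $W(0,{\boldsymbol 0},{\boldsymbol z}):=0$. The only delicate points are bookkeeping: confirming ${\boldsymbol z}$-independence so that the successor's altered ${\boldsymbol z}$ is immaterial to $\Phi$, and confirming that the proportional split preserves every feasibility inequality --- both of which reduce to the state constraints $l_i\leqslant\min\{t,p_i\}$ and $\sum_I z_I=1-t$.
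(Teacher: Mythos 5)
Your proposal is correct and follows essentially the same route as the paper's own proof: the $\Phi\geqslant\mathcal T\Phi$ direction by gluing a feasible action onto an optimal successor solution, and the $\Phi\leqslant\mathcal T\Phi$ direction by splitting an LP optimizer ${\boldsymbol y}^\ast$ proportionally into $\tfrac{\Delta}{t}{\boldsymbol y}^\ast$ and $\tfrac{t-\Delta}{t}{\boldsymbol y}^\ast$ and checking admissibility. Your added bookkeeping on the ${\boldsymbol z}$-independence of the LP and the $t=0$ boundary case is consistent with what the paper states or leaves implicit.
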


\begin{proof}
For $t=0$ the equality is immediate. For $t>0$, there are two parts to the proof.

\emph{($\geqslant$)} Let $(\Delta,{\boldsymbol y},{\boldsymbol y})$ be a stack-transformation choice\footnote{Given any feasible ${\boldsymbol y}$ in the lemma's constraint set, we choose ${\boldsymbol x}={\boldsymbol y}$; this is admissible because the only constraints on ${\boldsymbol x}$ are $\sum_{I\in{\boldsymbol I}}{\boldsymbol x}_I=\Delta$, ${\boldsymbol x}\geqslant{\boldsymbol 0}$, and $M{\boldsymbol x}=M{\boldsymbol y}$.} at $(t,{\boldsymbol l},{\boldsymbol z})$ and, after enacting this choice, let ${\boldsymbol y}'$ be any feasible remainder with $\sum_{I\in{\boldsymbol I}} y_I'=t-\Delta$ and $M{\boldsymbol y}'={\boldsymbol l}-M{\boldsymbol y}$. Then ${\boldsymbol y}{+}{\boldsymbol y}'$ is feasible for $\Phi(t,{\boldsymbol l},{\boldsymbol z})$ and, by definition~\eqref{eqn:statewiseLP},
\[
\Phi(t,{\boldsymbol l},{\boldsymbol z})\geqslant\sum_{I\in E}(y_I+y'_I)=e({\boldsymbol y})+\sum_{I\in E}y'_I= e({\boldsymbol y})+\Phi(t-\Delta,{\boldsymbol l}-M{\boldsymbol y},{\boldsymbol y}+{\boldsymbol z}),
\]
if we choose ${\boldsymbol y}^\prime$ such that $\sum_{I\in E}y'_I=\Phi(t-\Delta,{\boldsymbol l}-M{\boldsymbol y},{\boldsymbol y}+{\boldsymbol z})$. Taking the supremum over $(\Delta,{\boldsymbol y},{\boldsymbol y})$ yields $\Phi(t,{\boldsymbol l},{\boldsymbol z})\geqslant (\mathcal{T}\Phi)(t,{\boldsymbol l},{\boldsymbol z})$.

\emph{($\leqslant$)} Let ${\boldsymbol y}^*$ be any optimizer for $\Phi(t,{\boldsymbol l},{\boldsymbol z})$. Fix admissible $\Delta$ (by construction, $\Delta\in(0,t]$), set ${\boldsymbol y}:=\frac{\Delta}{t}{\boldsymbol y}^*$ and ${\boldsymbol y}':={\boldsymbol y}^*-{\boldsymbol y}$. Then $\sum_{I\in{\boldsymbol I}} y_I=\Delta$, $M{\boldsymbol y}=\frac{\Delta}{t}\,{\boldsymbol l}$, $\sum_{I\in{\boldsymbol I}} y_I'=t-\Delta$, $M{\boldsymbol y}'={\boldsymbol l}-M{\boldsymbol y}$. The transformation choice $(\Delta,{\boldsymbol y},{\boldsymbol y})$ is admissible at cut $t$, therefore
\[
\Phi(t,{\boldsymbol l},{\boldsymbol z})=\sum_{I\in E}{\boldsymbol y}^*_I=e({\boldsymbol y})+\sum_{I\in E}{\boldsymbol y}'_I\leqslant e({\boldsymbol y})+\Phi(t-\Delta,{\boldsymbol l}-M{\boldsymbol y},{\boldsymbol y}+{\boldsymbol z}).
\]
Taking the supremum over admissible $(\Delta,{\boldsymbol y},{\boldsymbol y})$ gives $\Phi(t,{\boldsymbol l},{\boldsymbol z})\leqslant (\mathcal{T}\Phi)(t,{\boldsymbol l},{\boldsymbol z})$.
\end{proof}



\begin{theorem}[Bellman fixed-point exists and equals the LP value]
\label{thm:bellmanconvergence}
There exists a bounded function $W^*:\mathcal{S}\to[0,1]$ such that
\[
W^*=\mathcal{T}W^*,\,\,\, W^*(0,{\boldsymbol 0},{\boldsymbol z})=0.
\]
Moreover, $W^*=\Phi$ on $\mathcal{S}$, and $W^*(1,{\boldsymbol p},{\boldsymbol 0})=\Phi^*$.
\end{theorem}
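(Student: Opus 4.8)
The plan is to split the statement into an easy existence-and-identification part and a substantive uniqueness part. For existence I would simply take $W^{*}:=\Phi$. Lemma~\ref{lem:statewise-Phi} shows $\Phi$ is well-defined and attained on all of $\mathcal{S}$, and since $0\le\sum_{I\in E}y_I\le\sum_{I\in\boldsymbol I}y_I=t\le 1$ for every feasible $\boldsymbol y$, we get $0\le\Phi\le t\le 1$, so $\Phi$ maps into $[0,1]$; the terminal LP at $t=0$ forces $\boldsymbol y=\boldsymbol 0$, giving $\Phi(0,\boldsymbol 0,\boldsymbol z)=0$. Lemma~\ref{lem:decomposition} gives $\Phi=\mathcal{T}\Phi$. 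Hence $W^{*}:=\Phi$ is a bounded fixed point with the required boundary value, and $W^{*}(1,\boldsymbol p,\boldsymbol 0)=\Phi(1,\boldsymbol p,\boldsymbol 0)=\Phi^{*}$ by Lemma~\ref{lem:statewise-Phi}. This discharges every written assertion except the implicit claim that $\Phi$ is \emph{the} fixed point, which is where the real work lies.

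The substantive content is uniqueness: any bounded $W$ with $W=\mathcal{T}W$ and $W(0,\boldsymbol 0,\boldsymbol z)=0$ must equal $\Phi$, and I would prove $W\ge\Phi$ and $W\le\Phi$ separately. For $W\ge\Phi$, observe that the full-segment action (slice length $\Delta=t$, with $\boldsymbol x=\boldsymbol y=\boldsymbol y^{*}$ a one-shot optimizer of \eqref{eqn:statewiseLP}) is admissible as in the proof of Lemma~\ref{lem:decomposition}: it satisfies $\sum_{I}y^{*}_I=t$, $M\boldsymbol y^{*}=\boldsymbol l$, and lands in the terminal state $(0,\boldsymbol 0,\boldsymbol y^{*}+\boldsymbol z)$. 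Evaluating the Bellman supremum on this single action gives $W(s)=\mathcal{T}W(s)\ge e(\boldsymbol y^{*})+W(0,\boldsymbol 0,\cdot)=\Phi(s)+0$, so $W\ge\Phi$ pointwise in one step.

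For $W\le\Phi$ I would run a verification (telescoping) argument on the gap $g:=W-\Phi\ge 0$. Along any admissible trajectory $s_0\to s_1\to\cdots$ built from $\varepsilon_k$-optimal actions, $W=\mathcal{T}W$ gives $W(s_k)\le e(\boldsymbol y_k)+W(s_{k+1})+\varepsilon_k$, while the ``$\geqslant$'' inequality of Lemma~\ref{lem:decomposition} gives the matching one-shot bound $e(\boldsymbol y_k)+\Phi(s_{k+1})\le\Phi(s_k)$; subtracting yields $g(s_k)\le g(s_{k+1})+\varepsilon_k$, hence $g(s_0)\le g(s_N)+\sum_{k<N}\varepsilon_k$. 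If the trajectory reached the terminal state, $g(s_N)=0$ and sending $\sum_k\varepsilon_k\to 0$ would force $g(s_0)\le 0$. The main obstacle is exactly that this DP is \emph{undiscounted} with a continuous ``horizon'' $t$: a near-optimal trajectory need not terminate, since admissible $\Delta$ may shrink and $\sum_k\Delta_k$ may stall strictly below $t$, leaving a residual $W(s_\infty)$ I cannot a priori kill. I would resolve this by a downward induction on the $t$-coordinate: each action strictly decreases $t$, so every reachable $s'$ lives at strictly smaller $t$, and together with $0\le g\le t$ and $g\equiv 0$ on the terminal slice one shows the level-indexed supremum $G(\tau):=\sup\{g(s):t\text{-coordinate}\le\tau\}$ is left-continuous with $G(0)=0$; the reward slack in Lemma~\ref{lem:decomposition}'s inequality—which vanishes precisely for the full-segment action landing at the terminal state—is what drives $G\equiv 0$. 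An alternative, cleaner framing is monotone value iteration: $\mathcal{T}$ is a monotone self-map of the complete lattice $\{W:0\le W(t,\boldsymbol l,\boldsymbol z)\le t\}$, $\Phi$ is a fixed point, and because the optimal value is already attainable by a single full-segment action, the iterates from the top element $W(t,\boldsymbol l,\boldsymbol z)=t$ decrease to $\Phi$, pinning down uniqueness. Either way the continuous-horizon termination issue is the crux; everything else is bookkeeping with the two preceding lemmas.
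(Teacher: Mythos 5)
Your first paragraph does prove the theorem as literally worded: taking $W^*:=\Phi$ and citing Lemmas~\ref{lem:statewise-Phi} and~\ref{lem:decomposition} discharges every displayed assertion, including $0\leqslant\Phi\leqslant t$ and $\Phi(0,{\boldsymbol 0},{\boldsymbol z})=0$. But that is not the content the paper intends or later relies on. In the paper, $W^*$ is defined as the DP value function $\sup_\pi W^\pi$, and the substance of the theorem is that this supremum over admissible policies equals the LP value: the upper bound $W^*\leqslant\Phi$ holds because the stacks ${\boldsymbol m}$ accumulated by any admissible policy are LP-feasible ($\sum_I m_I=t$, $M{\boldsymbol m}={\boldsymbol l}$), and the lower bound $W^*\geqslant\Phi$ is obtained by explicitly decomposing an LP optimizer ${\boldsymbol y}^*$ into at most $n+1$ admissible moves $(\Delta_i,{\boldsymbol y}^*_i,{\boldsymbol y}^*_i)$ with $\Delta_i=l_{j_i}$. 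That finite-step policy construction is precisely what Corollary~\ref{cor:nplus1stepstophi} cites, so a proof that never mentions policies cannot stand in for it.

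The effort you then invest goes in a different direction~---~uniqueness of the bounded fixed point~---~which the theorem does not assert and the paper does not prove, and your argument for it has two concrete problems. First, the one-shot action with $\Delta=t$ and ${\boldsymbol x}={\boldsymbol y}={\boldsymbol y}^*$ is generally inadmissible: the paper restricts slice lengths to $\Delta=l_i$ for some $l_i>0$, allowing $\Delta=t$ only when all $l_i=0$, which fails already at the initial state where $l_i=p_i<1=t$. This is exactly why Lemma~\ref{lem:decomposition} uses the scaled slice ${\boldsymbol y}=\frac{\Delta}{t}{\boldsymbol y}^*$ and the paper's proof uses layer-indexed slices. Second, you acknowledge that the $W\leqslant\Phi$ direction founders on non-terminating trajectories in this undiscounted, continuous-horizon setting and offer only a sketch of the downward induction on $t$; as written this is the crux and it is not resolved. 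If you instead define $W^*$ as the policy value, prove LP-feasibility of the accumulated stacks for the upper bound, and supply the at-most-$(n+1)$-step decomposition of ${\boldsymbol y}^*$ into admissible moves for the lower bound, you recover the paper's argument and the statement actually needed downstream.
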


\begin{proof}
Let $W^*$ denote the DP value function. Consider following a policy starting from $(t,{\boldsymbol l},{\boldsymbol z})$. Let vector ${\boldsymbol m}\in\mathbb{R}^{|\boldsymbol I|}_{\geqslant 0}$ contain only those accumulated ${\boldsymbol y}$ stack lengths resulting from applying the policy to stacks in $(0,t]$. Then, $\sum_{I\in{\boldsymbol I}} m_I=t$ and $M{\boldsymbol m}={\boldsymbol l}$. The total $E$-mass contributed by following the policy, $\sum_{I\in E} m_I$, is bounded: $\sum_{I\in E} m_I \leqslant \Phi(t,{\boldsymbol l},{\boldsymbol z})$.
Thus, upon taking the supremum over all policies, $W^*(t,{\boldsymbol l},{\boldsymbol z})\leqslant \Phi(t,{\boldsymbol l},{\boldsymbol z})$.

Conversely, for any optimizer ${\boldsymbol y}^*$ of $\Phi(t,{\boldsymbol l},{\boldsymbol z})$, there is a sequence of admissible moves $(\Delta_1,{\boldsymbol y}^*_1,{\boldsymbol y}^*_1),\ldots,\allowbreak(\Delta_k,{\boldsymbol y}^*_k,{\boldsymbol y}^*_k)$ for some $k\leqslant n+1$, such that $t=\sum_{i=1}^k\Delta_i$,  ${\boldsymbol y}^*=\sum_{i=1}^k{\boldsymbol y}^*_i$ and $\sum_{i=1}^{k}e({\boldsymbol y}^*_i)=\Phi(t,{\boldsymbol l},{\boldsymbol z})$. This can be constructed by choosing ${\boldsymbol y}^*_1$ to be all stacks in ${\boldsymbol y}^*$ that satisfy $A_{j_1}=1$ for some $j_1$, choosing $\Delta_1$ to be the sum of these stacks' horizontal lengths, then repeating analogous constructions for the remaining $(\Delta_{i},{\boldsymbol y}^*_i,{\boldsymbol y}^*_i)$ triplets based on any remaining $A_{j_i}$ layer contributions. If no $A_{j_i}=1$ stacks remain and $t>0$, choose the $I=0...0$ stack to construct a triplet. This construction produces admissible transformation triplets:\textbf{i)} $\Delta_1=\sum_{I\in{\boldsymbol I}}(y^*_1)_I=l_{j_1}\leqslant t$; \textbf{ii)} ${\boldsymbol u}_1:=M{\boldsymbol y}^*_1$ satisfies ${\boldsymbol 0}\leqslant {\boldsymbol u}_1\leqslant {\boldsymbol l}$ and ${\boldsymbol l}-{\boldsymbol u}_1=M{\boldsymbol y}^\prime_1\leqslant (t-\Delta_1){\boldsymbol 1}$, so the inequality $\max\{{\boldsymbol 0},{\boldsymbol l-(t-\Delta_1){\boldsymbol 1}}\leqslant {\boldsymbol u}_1\leqslant {\boldsymbol l}\}$ holds, so the triplet $(\Delta_1,{\boldsymbol y}^*_1,{\boldsymbol y}^*_1)$ is admissible; \textbf{iii)} at the next state $(t-\Delta_1,{\boldsymbol l}-{\boldsymbol u}_1,{\boldsymbol z}+{\boldsymbol y}^*_1)$ the remainder ${\boldsymbol y}^\prime_1$ is feasible in the sense of Lemma~\ref{lem:statewise-Phi}, so repeat the admissibility argument for the successive transformation triplets. Hence, $W^*(t,{\boldsymbol l},{\boldsymbol z})\geqslant \Phi(t,{\boldsymbol l},{\boldsymbol z})$. 

Since $(t,{\boldsymbol l},{\boldsymbol z})\in\mathcal S$ was arbitrary, we conclude $W^*=\Phi$ on $\mathcal S$. In particular, $W^*(1,{\boldsymbol p},{\boldsymbol 0})=\Phi(1,{\boldsymbol p},{\boldsymbol 0})=\Phi^*$ by Lemma~\ref{lem:statewise-Phi}.\qedhere

\end{proof}

\begin{theorem}[Value-iteration converges to LP value]
\label{thm:DPvalueiterationconvergence}
The sequence of functions $W_k:\mathcal S\rightarrow[0,1]$, $W_{k+1}:=\mathcal T W_k$, $W_0=0$ on $\mathcal S$, converges monotonically to $\Phi$.
\end{theorem}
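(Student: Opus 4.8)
The plan is to combine three ingredients: monotonicity of $\mathcal{T}$, the fixed-point identity $\Phi=\mathcal{T}\Phi$ from Lemma~\ref{lem:decomposition}, and the finite-horizon (bounded number of moves) structure already exposed in the proof of Theorem~\ref{thm:bellmanconvergence}. Since the problem is undiscounted, $\mathcal{T}$ is not a sup-norm contraction, so I would not appeal to Banach's theorem; instead I would show that value iteration stabilizes after finitely many steps.

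First I would record that $\mathcal{T}$ is monotone: if $W\leqslant W'$ pointwise on $\mathcal{S}$, then for every state the bracket $e(\boldsymbol y(t))+W(\cdots)$ is dominated termwise by $e(\boldsymbol y(t))+W'(\cdots)$, and taking suprema over the common action set gives $\mathcal{T}W\leqslant\mathcal{T}W'$. Using this together with $\Phi=\mathcal{T}\Phi$, an induction shows the iterates stay in $[0,1]$ and are bounded above by $\Phi$: from $W_0=0\leqslant\Phi$ we get $W_{k+1}=\mathcal{T}W_k\leqslant\mathcal{T}\Phi=\Phi$, while $\Phi(t,\boldsymbol l,\boldsymbol z)\leqslant\sum_{I\in\boldsymbol I}y_I=t\leqslant 1$ from the constraints in \eqref{eqn:statewiseLP} guarantees $W_k\in[0,1]$. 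A second induction, started from $W_0=0\leqslant\mathcal{T}0=W_1$ and propagated by monotonicity of $\mathcal{T}$, shows the sequence is nondecreasing. Hence $\{W_k\}$ increases and is bounded above by $\Phi$, so it converges pointwise to some $W_\infty\leqslant\Phi$.

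The crux is the matching lower bound, which I would obtain from the finite-horizon structure rather than a limiting continuity argument. Unrolling the operator identifies $W_k(t,\boldsymbol l,\boldsymbol z)$ with the supremum of $\sum_i e(\boldsymbol y_i)$ over admissible sequences of at most $k$ moves, with the terminal convention $W_0=0$ crediting nothing to any mass left after $k$ moves. The decomposition constructed in the proof of Theorem~\ref{thm:bellmanconvergence} shows that any LP optimizer $\boldsymbol y^*$ for $\Phi(t,\boldsymbol l,\boldsymbol z)$ splits into a sequence of at most $n+1$ admissible moves that exhausts all of $(0,t]$ and whose rewards sum to $\Phi(t,\boldsymbol l,\boldsymbol z)$. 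Feeding this sequence into the unrolled operator gives $W_{n+1}(t,\boldsymbol l,\boldsymbol z)\geqslant\Phi(t,\boldsymbol l,\boldsymbol z)$ at every state.

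Combining the two bounds yields $W_{n+1}=\Phi$, and monotonicity then forces $W_k=\Phi$ for all $k\geqslant n+1$; the sequence is therefore nondecreasing, bounded by $\Phi$, and eventually equal to $\Phi$, so it converges monotonically to $\Phi$, as claimed. I expect the main obstacle to be making the unrolling precise~---~in particular verifying that a sequence of fewer than $k$ moves that already reaches $t=0$ contributes exactly its accumulated reward (because the terminal state carries value $0$), and checking that the moves supplied by the earlier decomposition satisfy the policy-feasibility requirements at every intermediate state, so that the chain $W_{n+1}\geqslant\sum_i e(\boldsymbol y^*_i)=\Phi$ is legitimate throughout.
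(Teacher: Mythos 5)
Your proposal is correct and follows essentially the same route as the paper: monotonicity of $\mathcal{T}$, two inductions giving $W_k\uparrow$ and $W_k\leqslant\Phi$ via the fixed-point identity, and a lower bound obtained from the policy-domination inequality $W_k\geqslant\sum_{i}e(\boldsymbol y_i)$ together with the finite decomposition of an LP optimizer from Theorem~\ref{thm:bellmanconvergence}. The only difference is cosmetic: you pin down $W_{n+1}\geqslant\Phi$ explicitly (thereby absorbing Corollary~\ref{cor:nplus1stepstophi} into the argument), whereas the paper concludes $\sup_k W_k\geqslant W^*=\Phi$ without fixing a specific horizon.
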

\begin{proof}
For bounded $W,\,V$, if $W\leqslant V$ on $\mathcal S$, then $\mathcal T W = \sup\{e({\boldsymbol y})+W\}\leqslant \sup\{e({\boldsymbol y})+V\}=\mathcal T V$ schematically. Note that $W_0=0\leqslant \mathcal T W_0=W_1$, which implies $W_1=\mathcal T W_0\leqslant \mathcal T W_1 = W_2$; in general, by induction, $W_k\leqslant W_{k+1}$ for all $k$. Similarly, $W_0=0\leqslant\Phi$ implies $W_1=\mathcal T W_0\leqslant\mathcal T\Phi=\Phi$; induction proves $W_k\leqslant\Phi$ for all $k$. Thus, the bounded, monotonically increasing $W_k$, have a finite limit $\sup_k W_k$. Any policy that accrues total reward $\sum_{i=1}^{k}e({\boldsymbol y}_i)$ in $k$ steps (from admissible ``${\boldsymbol y}_i$''s) is dominated by $W_k$: $W_k=\sup\{e({\boldsymbol y})+W_{k-1}\}\geqslant e({\boldsymbol y}_k)+W_{k-1}\geqslant e({\boldsymbol y}_k)+\sup\{e({\boldsymbol y})+W_{k-2}\}\geqslant \sum_{i=k-1}^k e({\boldsymbol y}_i)+W_{k-3}\geqslant\ldots\geqslant\sum_{i=1}^k e({\boldsymbol y}_i)+W_0=\sum_{i=1}^k e({\boldsymbol y}_i)$. Thus, $\Phi=W^*\leqslant \sup_kW_k$. Conversely, $W_k\leqslant W^*=\Phi$ for all $k$. Thus, $\sup_k W_k\leqslant \Phi$. We conclude, $\sup_k W_k=\Phi$ on $\mathcal S$.
\end{proof}

\begin{corollary}[Finite-time value-iteration]
\label{cor:nplus1stepstophi}
 $W_{k}=\Phi$ on $\mathcal S$,  for all $k\geqslant n+1$.
\end{corollary}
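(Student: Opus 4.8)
The plan is to combine two facts already in hand: the two-sided relations from Theorem~\ref{thm:DPvalueiterationconvergence} (namely $W_k\leqslant\Phi$ for all $k$, monotonicity $W_k\leqslant W_{k+1}$, and the domination inequality that any admissible $k$-step policy accrues total reward at most $W_k$), together with the finite-length decomposition built in the ``$\geqslant$'' part of the proof of Theorem~\ref{thm:bellmanconvergence}. Since $W_k\leqslant\Phi$ always holds, it suffices to establish the reverse inequality $W_k\geqslant\Phi$ for every $k\geqslant n+1$, and for this it is enough to exhibit, at each state, an admissible policy of length at most $n+1$ whose total reward already equals $\Phi$.

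First I would fix an arbitrary $(t,\boldsymbol{l},\boldsymbol{z})\in\mathcal{S}$ and invoke the decomposition: for any optimizer $\boldsymbol{y}^*$ of $\Phi(t,\boldsymbol{l},\boldsymbol{z})$ there is a sequence of admissible moves $(\Delta_1,\boldsymbol{y}^*_1,\boldsymbol{y}^*_1),\ldots,(\Delta_m,\boldsymbol{y}^*_m,\boldsymbol{y}^*_m)$ with $\sum_{i=1}^m e(\boldsymbol{y}^*_i)=\Phi(t,\boldsymbol{l},\boldsymbol{z})$ and, crucially, $m\leqslant n+1$. This uniform bound $m\leqslant n+1$ is the structural heart of the argument: each move is obtained by peeling off all remaining stacks that activate a single, not-yet-used event index $A_{j_i}$, so each move retires at least one of the $n$ indices, and at most one further move disposes of the residual $I=0\ldots 0$ stack.

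Next I would apply the domination inequality of Theorem~\ref{thm:DPvalueiterationconvergence} with $k=m$ to this particular admissible $m$-step policy, giving $W_m(t,\boldsymbol{l},\boldsymbol{z})\geqslant\sum_{i=1}^m e(\boldsymbol{y}^*_i)=\Phi(t,\boldsymbol{l},\boldsymbol{z})$. Because $m\leqslant n+1$, monotonicity of the $W_k$ yields $W_{n+1}(t,\boldsymbol{l},\boldsymbol{z})\geqslant W_m(t,\boldsymbol{l},\boldsymbol{z})\geqslant\Phi(t,\boldsymbol{l},\boldsymbol{z})$. Combining with $W_{n+1}\leqslant\Phi$ gives $W_{n+1}(t,\boldsymbol{l},\boldsymbol{z})=\Phi(t,\boldsymbol{l},\boldsymbol{z})$, and since the state was arbitrary this holds on all of $\mathcal{S}$. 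Finally, for any $k\geqslant n+1$, monotonicity sandwiches $\Phi=W_{n+1}\leqslant W_k\leqslant\Phi$, so $W_k=\Phi$ on $\mathcal{S}$.

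I do not expect a genuine obstacle, as the essential work was done in Theorem~\ref{thm:bellmanconvergence}. The single point requiring care is that the decomposition length $m$ depends on the state, so I must \emph{not} claim $W_m=\Phi$ for one fixed $m$; instead I use the \emph{uniform} bound $m\leqslant n+1$ together with monotonicity to pass from the state-dependent $W_m$ to the state-independent index $n+1$. A minor secondary check is that the inequalities invoked are valid at every state of $\mathcal{S}$, including those with small $t$ and the terminal case $t=0$ (where both sides vanish), which is immediate since each relation used holds statewise on all of $\mathcal{S}$.
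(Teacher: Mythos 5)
Your proposal is correct and follows essentially the same route as the paper: it combines the existence of an at-most-$(n+1)$-step optimal policy (from the ``$\geqslant$'' construction in Theorem~\ref{thm:bellmanconvergence}) with the monotonicity and domination inequalities established in Theorem~\ref{thm:DPvalueiterationconvergence}. Your explicit handling of the state-dependent decomposition length $m$ via the uniform bound $m\leqslant n+1$ is a worthwhile clarification of a step the paper leaves implicit, but it is not a different argument.
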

\begin{proof}
Follows from Theorems~\ref{thm:bellmanconvergence} (there exists a policy that achieves the optimum in at most $n+1$ steps) and \ref{thm:DPvalueiterationconvergence} (so, the value of $n+1$ steps, $W_{n+1}$, is the DP/LP value, which is a fixed point of the Bellman operator; all higher step values are also the DP/LP value).
\end{proof}


\section{Discussion. }
\label{sec_discussion}
\paragraph{Remarks.}
\label{subsec_remarks}
The bounds assume strict inequalities between the $p_i$ probabilities of the ``$A_i$'' events; bounds for weaker inequalities are simple limiting cases of these bounds.
 
The proof of Theorem\ \ref{thrm_sup}'s bounds can be viewed in terms of the DP problem underpinning Theorem~\ref{thm:bellmanconvergence}. 
Formally, when $E = E_k := \{I \in \boldsymbol{I} : |I| \geqslant k\}$, the constructive steps of Theorem~\ref{thrm_sup} are three nested policy improvements. For any admissible policy $\pi$, let $\mathbb P_\pi$ denote its terminal stack-configuration distribution at time $t=0$, and write $\Phi_k(\mathbb P) := \mathbb P\big(\bigcup_{I\in E_k} A_I\big)$ for the corresponding objective. Step~1 defines a post-processing policy $\Pi_1$ which, applied to $\mathbb P_\pi$, rearranges layers to produce a distribution $\mathbb P_\pi^{(1)} \in \mathcal D$ (as defined in Theorem~\ref{thrm_sup}) that preserves the marginals and satisfies $\Phi_k(\mathbb P_\pi^{(1)}) \geqslant \Phi_k(\mathbb P_\pi)$. Step~2 defines a second post-processing policy $\Pi_2$ which, for any input $\mathbb P \in \mathcal D$, splits stacks with more than $k$ layers to obtain $\mathbb P^{(2)} \in \mathcal D'$, again with unchanged marginals and $\Phi_k(\mathbb P^{(2)}) \geqslant \Phi_k(\mathbb P)$. Step~3 defines a third post-processing policy $\Pi_3$ which, for any $\mathbb P \in \mathcal D'$, reassigns layers from stacks with fewer than $k$ overlaps to stacks with at least $k$ overlaps whenever possible, and thereby reaches a Type~I or Type~II distribution $\mathbb P^{(3)} \in \mathcal D''$ with $\Phi_k(\mathbb P^{(3)}) \geqslant \Phi_k(\mathbb P)$. Thus, starting from the terminal distribution of any original policy $\pi$, the composite policy $\Pi_3 \circ \Pi_2 \circ \Pi_1 \circ \pi$ yields a terminal distribution in $\mathcal D''$ whose value is at least $W^\pi(1,\boldsymbol p,\boldsymbol 0)$, so that $\sup_\pi W^\pi(1,\boldsymbol p,\boldsymbol 0) = \sup_{\mathbb P \in \mathcal D''} \Phi_k(\mathbb P)$. By Theorem~\ref{thm:bellmanconvergence}, this supremum equals the optimal value $W^*(1,\boldsymbol p,\boldsymbol 0)$, and by Step~4 of Theorem~\ref{thrm_sup} the value of $\Phi_k$ on $\mathcal D''$ is the closed form in Theorem~\ref{thrm_sup}. For general $E$, Theorem~\ref{thm:DPvalueiterationconvergence} suggests (for each $k\leqslant n+1$) a policy $\pi_k$, with value $W_k$, that dominates all policies over $k$ steps.

Theorem~\ref{thm:bellmanconvergence} relies on feasible distributions that satisfy $\{ {\boldsymbol y} \geqslant 0 : \sum_{I\in{\boldsymbol I}} y_I = 1,\; M{\boldsymbol y} = {\boldsymbol p}\}$. If, in addition to the single--event marginals
$\boldsymbol p$, we are given (exact or interval) constraints on the probabilities
of finitely many further Boolean formulae in the ``$A_i$'' events~---~of the form
$\mathbb P(F_j) = q_j$ or $a_j \leqslant \mathbb P(F_j) \leqslant b_j$ for Boolean formulae
$F_1,\dots,F_m$~---~we may introduce extra rows in $M$ for these formulae and extend the vector $\boldsymbol p$ accordingly.  The same Bellman operator
(with the state space enlarged by the new coordinates) and the same fixed--point
argument then apply verbatim.  Thus, the DP framework provides
a constructive solution for sharp bounds under any finite system of such linear
constraints on probabilities of Boolean events. 
In trivial cases (e.g. $E=\emptyset$ or ${\boldsymbol I}$) the sharp bound is $0$ or $1$, and the extremal distributions need not be unique.

Theorem~\ref{thm:bellmanconvergence} gives an equivalence between a DP problem and an LP problem. Classical DP/LP equivalence results go back to Manne and many later authors \cite{manne1960,buyuktahtakin2011}, who show that Markov decision processes can be reformulated as linear programs over value-function or occupation-measure variables subject to Bellman-type inequalities, with the LP optimum equal to the Bellman fixed point for the DP. Theorem~\ref{thm:bellmanconvergence} is analogous in spirit, but distinct: it identifies the Bellman fixed point, $W^*$, with the optimum of the Boole--Fr\'{e}chet LP~\eqref{eqn:statewiseLP}, whose decision variables are joint stack-distributions $\boldsymbol y$ constrained only by probability and marginal conditions. This LP structure and feasible set differ substantially from standard Bellman-inequalities or occupation-measure formulations.

Conceptually, this DP framework is one way of looking at extremal rearrangements under fixed marginals, and many sharp bounds it yields could, in principle, also be approached via copula constructions such as ``shuffles-of-$M$'' \cite{DuranteSarkociSempi2009,MikusinskiSherwoodTaylor1992}.

\paragraph{Bounds on Probabilities of exactly ``$k$--out--of--$n$'' Events.}
\label{subsec_extensions}

The upper bounds for ${\mathbb{P}}(\text{at least }\allowbreak k\text{--out--of--}\allowbreak n\text{ events})$ are also the upper bounds for ${\mathbb{P}}(k\text{--out--of--}n\text{ events})$ (i.e. $\mathbb{P}(\cup_{I\in{\boldsymbol I}:|I|=k}A_I)$) except when \eqref{eqn_soln} equals $1$. This is because, when \eqref{eqn_soln} does not equal $1$, there is some Type I distribution that attains this bound, and only stacks with exactly $k$ overlapping layers contribute to the upper bound. However, when \eqref{eqn_soln} equals $1$ (so $r^\ast=0$), a Type II distribution attains this bound, where the probability for stacks of more than $k$ layers is as small as possible (intuitively, these stacks are as ``thin'' and as ``tall'' as possible). In this case, each stack with $k$ layers for ``$A$'' events is the complement of a stack with $(n-k)$ layers for ``$A^c$'' events, and Type II distributions in terms of ``$A$'' events are Type I distributions in terms of ``$A^c$'' events. Consequently, the upper bound for ${\mathbb{P}}(k\text{--out--of--}n\text{ events})$ is indirectly given by these complementary Type I distributions (e.g. Fig.~\ref{fig_complementarystack_upperbound}). In summary, the upper bound for ${\mathbb{P}}(k\text{--out--of--}n\text{ events})$ is \eqref{eqn_soln}, except when \eqref{eqn_soln} is $1$; then, the upper bound is obtained from the complementary distribution as the upper bound for ${\mathbb{P}}(\text{at least }(n-k)\text{--out--of--}n\text{ events do not occur})$, by substituting each $p_i$ in \eqref{eqn_soln} with $(1-p_{n-i+1})$ and substituting $k$ with $(n-k)$. The bound is $\min\{\sum_{i=r^\ast+1}^{n}(1-p_i)/(n-k-r^\ast),1\}$ for $r^\ast$ many dominating ``$A^c$''s (see \eqref{eqn_soln_sup_2})\footnote{In general, a distribution's $r^\ast$ differs from that of its complementary distribution.}.

\begin{figure}[t!]
    \centering
	\begin{tikzpicture}
            \begin{scope}
                \coordinate (r1lu) at (0.3,-1.85);
                \coordinate (r1ld) at (0.3,-1.75);
                \coordinate (r1rd) at (3.7,-1.75);
                \coordinate (r1ru) at (3.7,-1.85);
                \fill[fill=gray!70] (r1lu) -- (r1ld) -- (r1rd) -- (r1ru) -- cycle;

                \coordinate (r2lu) at (0.3,-0.85);
                \coordinate (r2ld) at (0.3,-0.75);
                \coordinate (r2rd) at (3.7,-0.75);
                \coordinate (r2ru) at (3.7,-0.85);
                \fill[fill=gray!70] (r2lu) -- (r2ld) -- (r2rd) -- (r2ru) -- cycle;

                \coordinate (r3lu) at (0.3,-1.35);
                \coordinate (r3ld) at (0.3,-1.25);
                \coordinate (r3rd) at (3.7,-1.25);
                \coordinate (r3ru) at (3.7,-1.35);
                \fill[fill=gray!70] (r3lu) -- (r3ld) -- (r3rd) -- (r3ru) -- cycle;

                \coordinate (r3lu) at (-0.85,-1.35);
                \coordinate (r3ld) at (-0.85,-1.25);
                \coordinate (r3rd) at (0.2,-1.25);
                \coordinate (r3ru) at (0.2,-1.35);
                \fill[fill=gray!70] (r3lu) -- (r3ld) -- (r3rd) -- (r3ru) -- cycle;

                \coordinate (r3lu) at (-0.85,-1.85);
                \coordinate (r3ld) at (-0.85,-1.75);
                \coordinate (r3rd) at (0.2,-1.75);
                \coordinate (r3ru) at (0.2,-1.85);
                \fill[fill=gray!70] (r3lu) -- (r3ld) -- (r3rd) -- (r3ru) -- cycle;

                \coordinate (r3lu) at (-0.85,-0.85);
                \coordinate (r3ld) at (-0.85,-0.75);
                \coordinate (r3rd) at (0.2,-0.75);
                \coordinate (r3ru) at (0.2,-0.85);
                \fill[fill=gray!70] (r3lu) -- (r3ld) -- (r3rd) -- (r3ru) -- cycle;

                \coordinate (r3lu) at (3.8,-1.35);
                \coordinate (r3ld) at (3.8,-1.25);
                \coordinate (r3rd) at (4.85,-1.25);
                \coordinate (r3ru) at (4.85,-1.35);
                \fill[fill=gray!70] (r3lu) -- (r3ld) -- (r3rd) -- (r3ru) -- cycle;

                \coordinate (r3lu) at (3.8,-1.85);
                \coordinate (r3ld) at (3.8,-1.75);
                \coordinate (r3rd) at (4.85,-1.75);
                \coordinate (r3ru) at (4.85,-1.85);
                \fill[fill=gray!70] (r3lu) -- (r3ld) -- (r3rd) -- (r3ru) -- cycle;

                \coordinate (r3lu) at (3.8,-0.85);
                \coordinate (r3ld) at (3.8,-0.75);
                \coordinate (r3rd) at (4.85,-0.75);
                \coordinate (r3ru) at (4.85,-0.85);
                \fill[fill=gray!70] (r3lu) -- (r3ld) -- (r3rd) -- (r3ru) -- cycle;
                
                \coordinate (r3lu) at (0.3,-1.35);
                \coordinate (r3ld) at (0.3,-1.25);
                \coordinate (r3rd) at (1.4,-1.25);
                \coordinate (r3ru) at (1.4,-1.35);
                \fill[fill=black] (r3lu) -- (r3ld) -- (r3rd) -- (r3ru) -- cycle;

                \coordinate (r2lu) at (1.4,-0.85);
                \coordinate (r2ld) at (1.4,-0.75);
                \coordinate (r2rd) at (2.5,-0.75);
                \coordinate (r2ru) at (2.5,-0.85);
                \fill[fill=black] (r2lu) -- (r2ld) -- (r2rd) -- (r2ru) -- cycle;

                \coordinate (r1lu) at (2.5,-1.85);
                \coordinate (r1ld) at (2.5,-1.75);
                \coordinate (r1rd) at (3.7,-1.75);
                \coordinate (r1ru) at (3.7,-1.85);
                \fill[fill=black] (r1lu) -- (r1ld) -- (r1rd) -- (r1ru) -- cycle;

                \draw[line width=0.07cm,white] (1.4,-0.7) -- (1.4,-2.5);
                
                \draw[line width=0.07cm,white] (2.5,-0.7) -- (2.5,-2.5);





                    
                \draw[-{Latex[length=3mm]},line width=0.06cm] (-1,-2.25) -- (5.85,-2.25); 
                \node[anchor=north,scale=1] (x_axis_label) at (5,-2.65) {$\pmb{1}$}; 

                \node[anchor=north east,scale=1] (x_axis_label) at (-0.55,-2.62) {$\pmb{0}$}; 

                \draw[-{Latex[length=3mm]},line width=0.06cm] (-1,-2.25) -- (5.85,-2.25); 

                \coordinate (p101_l) at (1.45,-3);
                \coordinate (p101_r) at (2.55,-3);
                \draw[|-|,line width=0.06cm] ($(p101_l)!5mm!90:(p101_r)$)--($(p101_r)!5mm!-90:(p101_l)$);
                \node[anchor=west,scale=1] (p101_lab) at (1.6,-2.9) {$p_{101}$}; 

                \coordinate (p110_l) at (0.3,-3);
                \coordinate (p110_r) at (1.4,-3);
                \draw[|-|,line width=0.06cm] ($(p110_l)!5mm!90:(p110_r)$)--($(p110_r)!5mm!-90:(p110_l)$);
                \node[anchor=west,scale=1] (p110_lab) at (0.45,-2.9) {$p_{110}$}; 

                \coordinate (p011_l) at (2.6,-3);
                \coordinate (p011_r) at (3.8,-3);
                \draw[|-|,line width=0.06cm] ($(p011_l)!5mm!90:(p011_r)$)--($(p011_r)!5mm!-90:(p011_l)$);
                \node[anchor=west,scale=1] (p011_lab) at (2.8,-2.9) {$p_{011}$}; 

                \coordinate (p000_l) at (-0.85,-3);
                \coordinate (p000_r) at (0.26,-3);
                \draw[|-|,line width=0.06cm] ($(p000_l)!5mm!90:(p000_r)$)--($(p000_r)!5mm!-90:(p000_l)$);

                \coordinate (p000_l) at (3.85,-3);
                \coordinate (p000_r) at (5,-3);
                \draw[|-|,line width=0.06cm] ($(p000_l)!5mm!90:(p000_r)$)--($(p000_r)!5mm!-90:(p000_l)$);

                 \node[anchor=west,scale=1] (G6) at (1.6,-3.5) {$p_{111}$};
                \node[scale=1.5] (G7) at (-0.15,-3.3) {$\pmb{\underbrace{}{}}$};    
                \draw[line width = {0.05cm}] (-0.2,-3.5) .. controls (-0.2,-3.9) .. (G6);
                \node[scale=1.4] (G8) at (4.5,-3) {$\pmb{\underbrace{}{}}$};
                \draw[line width = {0.05cm}] (4.45,-3.2) .. controls (4.5,-3.9) .. (G6);

                
                \node[anchor=west,scale=1] (p1_lab) at (-1.5,-1.8) {$p_{1}$}; 
                \node[anchor=west,scale=1] (p2_lab) at (-1.5,-0.8) {$p_{2}$}; 
                \node[anchor=west,scale=1] (p3_lab) at (-1.5,-1.3) {$p_{3}$}; 

            \end{scope}    
        \end{tikzpicture}
    \caption{\textit{A Type II distribution (gray stacks) that attains the upper bound value $1$ for ${\mathbb{P}}(\text{at least }2\text{--out--of--}3\text{ events})$ (cf. Fig.\ \ref{fig_typeIIdistribution}). Here, $r^\ast=0$. Its complementary Type I distribution has 3 stacks with $1$ (black) layer each. While not all of the gray stacks have exactly $2$ layers (hard to compute ${\mathbb{P}}(2\text{--out--of--}3\text{ events})$), all the black stacks have $1$ layer (easy to compute the equivalent ${\mathbb{P}}(1\text{--out--of--}3\text{ events do not occur})$, as $\sum_{i=1}^{3}(1-p_i)$).}
    }
	\label{fig_complementarystack_upperbound}
\end{figure}
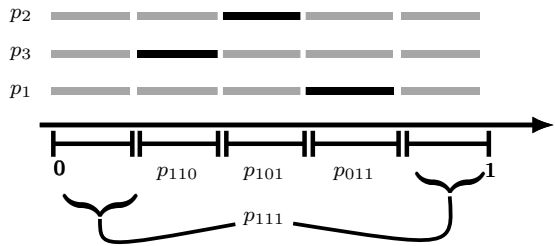

\begin{figure}[t!]
    \centering
	\begin{tikzpicture}
            \begin{scope}
                \coordinate (r1lu) at (0.3,-1.85);
                \coordinate (r1ld) at (0.3,-1.75);
                \coordinate (r1rd) at (3.7,-1.75);
                \coordinate (r1ru) at (3.7,-1.85);
                \fill[fill=black] (r1lu) -- (r1ld) -- (r1rd) -- (r1ru) -- cycle;

                \coordinate (r2lu) at (0.3,-0.85);
                \coordinate (r2ld) at (0.3,-0.75);
                \coordinate (r2rd) at (3.7,-0.75);
                \coordinate (r2ru) at (3.7,-0.85);
                \fill[fill=black] (r2lu) -- (r2ld) -- (r2rd) -- (r2ru) -- cycle;

                \coordinate (r3lu) at (0.3,-1.35);
                \coordinate (r3ld) at (0.3,-1.25);
                \coordinate (r3rd) at (3.7,-1.25);
                \coordinate (r3ru) at (3.7,-1.35);
                \fill[fill=gray!70] (r3lu) -- (r3ld) -- (r3rd) -- (r3ru) -- cycle;

                \coordinate (r3lu) at (-0.85,-1.35);
                \coordinate (r3ld) at (-0.85,-1.25);
                \coordinate (r3rd) at (0.2,-1.25);
                \coordinate (r3ru) at (0.2,-1.35);
                \fill[fill=black] (r3lu) -- (r3ld) -- (r3rd) -- (r3ru) -- cycle;

                \coordinate (r3lu) at (-0.85,-1.85);
                \coordinate (r3ld) at (-0.85,-1.75);
                \coordinate (r3rd) at (0.2,-1.75);
                \coordinate (r3ru) at (0.2,-1.85);
                \fill[fill=black] (r3lu) -- (r3ld) -- (r3rd) -- (r3ru) -- cycle;

                \coordinate (r3lu) at (-0.85,-0.85);
                \coordinate (r3ld) at (-0.85,-0.75);
                \coordinate (r3rd) at (0.2,-0.75);
                \coordinate (r3ru) at (0.2,-0.85);
                \fill[fill=gray!70] (r3lu) -- (r3ld) -- (r3rd) -- (r3ru) -- cycle;

                \coordinate (r3lu) at (3.8,-1.35);
                \coordinate (r3ld) at (3.8,-1.25);
                \coordinate (r3rd) at (4.85,-1.25);
                \coordinate (r3ru) at (4.85,-1.35);
                \fill[fill=gray!70] (r3lu) -- (r3ld) -- (r3rd) -- (r3ru) -- cycle;
                
                \coordinate (r3lu) at (2.5,-0.86);
                \coordinate (r3ld) at (2.5,-0.745);
                \coordinate (r3rd) at (3.7,-0.745);
                \coordinate (r3ru) at (3.7,-0.86);
                \fill[fill=gray!70] (r3lu) -- (r3ld) -- (r3rd) -- (r3ru) -- cycle;

                \coordinate (r3lu) at (3.8,-1.85);
                \coordinate (r3ld) at (3.8,-1.75);
                \coordinate (r3rd) at (4.85,-1.75);
                \coordinate (r3ru) at (4.85,-1.85);
                \fill[fill=gray!70] (r3lu) -- (r3ld) -- (r3rd) -- (r3ru) -- cycle;

                \coordinate (r3lu) at (3.8,-0.85);
                \coordinate (r3ld) at (3.8,-0.75);
                \coordinate (r3rd) at (4.85,-0.75);
                \coordinate (r3ru) at (4.85,-0.85);
                \fill[fill=gray!70] (r3lu) -- (r3ld) -- (r3rd) -- (r3ru) -- cycle;
                



                
                \draw[line width=0.07cm,white] (2.5,-0.7) -- (2.5,-2.5);





                    
                \draw[-{Latex[length=3mm]},line width=0.06cm] (-1,-2.25) -- (5.85,-2.25); 
                \node[anchor=north,scale=1] (x_axis_label) at (5,-2.65) {$\pmb{1}$}; 

                \node[anchor=north east,scale=1] (x_axis_label) at (-0.55,-2.62) {$\pmb{0}$}; 

                \draw[-{Latex[length=3mm]},line width=0.06cm] (-1,-2.25) -- (5.85,-2.25); 

                \coordinate (p001_l) at (0.3,-3);
                \coordinate (p001_r) at (2.55,-3);
                \draw[|-|,line width=0.06cm] ($(p001_l)!5mm!90:(p001_r)$)--($(p001_r)!5mm!-90:(p001_l)$);
                \node[anchor=west,scale=1] (p001_lab) at (1.1,-2.9) {$p_{001}$}; 

                \node[anchor=west,scale=1] (p001_lab) at (-0.65,-2.9) {$p_{010}$}; 

                \coordinate (p011_l) at (2.6,-3);
                \coordinate (p011_r) at (3.8,-3);
                \draw[|-|,line width=0.06cm] ($(p011_l)!5mm!90:(p011_r)$)--($(p011_r)!5mm!-90:(p011_l)$);
                \node[anchor=west,scale=1] (p011_lab) at (2.8,-2.9) {$p_{011}$}; 

                \coordinate (p000_l) at (-0.85,-3);
                \coordinate (p000_r) at (0.26,-3);
                \draw[|-|,line width=0.06cm] ($(p000_l)!5mm!90:(p000_r)$)--($(p000_r)!5mm!-90:(p000_l)$);
                \node[anchor=west,scale=1] (p000_lab) at (4,-2.9) {$p_{111}$}; 

                \coordinate (p000_l) at (3.85,-3);
                \coordinate (p000_r) at (5,-3);
                \draw[|-|,line width=0.06cm] ($(p000_l)!5mm!90:(p000_r)$)--($(p000_r)!5mm!-90:(p000_l)$);


                
                \node[anchor=west,scale=1] (p1_lab) at (-1.5,-1.8) {$p_{1}$}; 
                \node[anchor=west,scale=1] (p2_lab) at (-1.5,-0.8) {$p_{2}$}; 
                \node[anchor=west,scale=1] (p3_lab) at (-1.5,-1.3) {$p_{3}$}; 

            \end{scope}    
        \end{tikzpicture}
    \caption{\textit{A Type I distribution (with black stacks) that attains the upper bound $\sum_{i=2}^{3}(1-p_i)$ for ${\mathbb{P}}(\text{at least }2\text{--out--of--}3\text{ events do not occur})$ (cf. Fig.\ \ref{fig_typeIdistribution}). Here, $r^\ast=1$, since the $A_1^c$ layer dominates. There is a complementary, $2$-layer, gray stack on top of the single-layer, black stack for the probability $p_{011}$. This is the only stack with $2$--out--of--$3$ ``$A$''  events, so the lower bound for ${\mathbb{P}}(2\text{--out--of--}3\text{ events})$ is $p_{011}$. Computing $p_{011}$ is difficult using the gray stacks, but easy using the black stacks; focusing on the dominating $A_1^c$ layer, $p_{011}$ is the part of $(1-p_1)$ that does not contribute to the ``at least $2$--out--of--$3$ events do not occur'' probability, so $p_{011}=(1-p_1)-\sum_{i=2}^{3}(1-p_i)$.}
    }
	\label{fig_comlementarystack_lowerbound}
\end{figure}
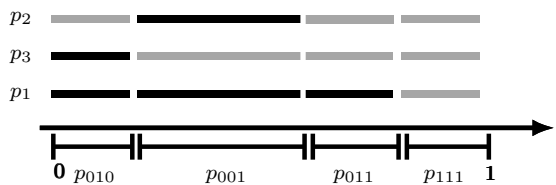  
Obtaining lower bounds for ${\mathbb{P}}(k\text{--out--of--}n\text{ events})$ is slightly more involved. \footnote{By the monotonicity of measures, when \eqref{eqn_soln_inf_2} is $0$, the lower bound for $\mathbb{P}(k\text{--out--of--}n\text{ events})$ is also $0$.}When \eqref{eqn_soln_inf_2} is not $0$, \eqref{eqn_soln_inf_2} is indirectly attained by a Type I distribution of ``$A^c$'' events, where only stacks with $(n-k+1)$ layers contribute to the lower bound. Here, use
\begin{align}
\label{eqn_exactlykoutofn_complmntryProb}
&{\mathbb{P}}(\text{at least }k\text{--out--of--}n\text{ events})\nonumber\\ =& 1-{\mathbb{P}}(\text{at most }(k-1)\text{--out--of--}n\text{ events})\nonumber\\ =&1-{\mathbb{P}}(\text{at least }(n-k+1)\text{--out--of--}n\text{ events do not occur})\,. \end{align} Such distributions have $r^\ast$ many ``$A^c$'' events that dominate the other ``$A^c$'' events; i.e. these events are part of all compound events that satisfy ``exactly ($n-k+1$) events do not occur''. This means only stacks consisting of these dominating events can define complementary stacks with exactly $k$ many ``$A$'' events. Indeed, it is \emph{only} when $r^\ast=n-k$ in \eqref{eqn_soln_inf_2} that we \emph{can} have such a stack with exactly $k$ many ``$A$'' events. This complementary stack represents a probability that is the length of the smallest overlap of all leftover layers from the dominating ``$A^c$'' events (e.g. Fig.~\ref{fig_comlementarystack_lowerbound}). So, for $r^\ast=n-k$, the lower bound for ${\mathbb{P}}(k\text{--out--of--}n\text{ events})$ is 
\begin{align*}
\left\{\begin{array}{ll}
\max\{\sum_{i=1}^{n-k}(1-p_i)-\sum_{i=n-k+1}^{n}(1-p_i) -(n-k-1),\,0\}\,;&\text{if }n>k \\
\max\{1-\sum_{i=1}^{n}(1-p_i),\,0\}\hfill;&\text{if }n=k
\end{array}\right.
\end{align*}
If instead, $r^\ast< n-k$ in \eqref{eqn_soln_inf_2}, the dominating layers cannot form a complementary stack with exactly $k$ layers, so the lower bound for ${\mathbb{P}}(k\text{--out--of--}n\text{ events})$ is $0$.  

\paragraph{The significance of $\pmb{r^\ast}$.} For distributions that attain the upper bound \eqref{eqn_soln}, $r^\ast$ is the number of ``$A$'' events that always occur whenever at least $k$ many ``$A$''s occur. 
$r^\ast$ also defines a stopping rule in a targeted search for a bound. With a sorted list of the ``$p_i$''s, $r^\ast=0$ in the special cases when either $k=1$ (for upper bounds) or $k=n$ (for lower bounds). When not in these special cases, one could begin the search by checking if the inequality defining $r^\ast$ (in \eqref{eqn_soln} or \eqref{eqn_soln_inf_2}) is satisfied when $r=1$. If it is, then there is a search for the maximum $r$ that satisfies this inequality. Otherwise, if $r=1$ does not satisfy the inequality, then $r^\ast=0$. An efficient search for $r^\ast$ could use the following fact recursively: if the inequality is satisfied for some $r>1$, then it is satisfied for all smaller $r$ up to $r=1$ (e.g. see step 4 of Theorem \ref{thrm_sup}'s proof). There are, at most, $(k-1)$ inequality checks for $r^\ast$ in a search for an upper bound, and at most $(n-k)$ in a search for a lower bound. These ``number of checks'' can be considerably reduced (e.g. by using a modified binary search) to $\mathcal{O}(\log k)$ for upper bound searches when $k>1$, and $\mathcal{O}(\log (n-k))$ for lower bound searches when $n>k$. These savings can be significant if searching for several ``$k$'' (fixed $n$) bounds. 

The DP formulation in Section~\ref{subsec_DPformulation} is
primarily a structural tool: it identifies the Boole--Fr\'{e}chet bound $\Phi^\ast$ from an LP problem with the value of an
equivalent control problem, and characterizes  the value function as a bounded fixed
point of the Bellman operator $\mathcal T$.  From an algorithmic perspective,
the statewise linear programme of Lemma~\ref{lem:statewise-Phi} already shows
that, in general, computing $\Phi^\ast = \Phi(1,{\boldsymbol p},{\boldsymbol 0})$ exactly requires solving
an LP with $|\boldsymbol I| = 2^n$ variables and $n$ (or more) linear
constraints; exponential in the
number $n$ of atomic events.  Thus, the DP framework of
Theorem~\ref{thm:bellmanconvergence} should
be viewed as a constructive representation of the sharp bounds, with genuine
computational gains arising only in special cases where the extremal structure
simplifies.  In particular, in the ``$k$--out--of--$n$'' setting, the resulting
sharp bounds can be evaluated in $\mathcal O(n)$ time for sorted $p_i$, or $\mathcal O(n\log n)$ for unsorted $p_i$,
using the stopping index $r^\ast$. When multiple bounds are computed, each subsequent bound is either $\mathcal O(\log k)$ or $\mathcal O(\log (n-k))$.

A natural generalization of $r^*$ presents itself within the DP formulation. For a non-trivial compound event $E$, let set $\mathcal Y(E,\boldsymbol p)$ contain all optimal configurations of stacks that achieve the upper bound on $\mathbb P(E)$ subject to marginals $\boldsymbol p$; i.e. for ${\boldsymbol y}^*\in\mathcal Y(E,\boldsymbol p)$, $e({\boldsymbol y}^*)=\sum_{I\in E}y^*_I=\Phi^*=\sup\{\;\mathbb P(E)\mid\mathbb P\text{ has marginals }{\boldsymbol p}\}$ where $M{\boldsymbol y}^*={\boldsymbol p}$. The index set of $s$-dominating $A_i$ layers in configuration $y^*$ is $D^s({\boldsymbol y}^*):=\{\;i: |\{\;I\in E\mid y^*_I>0,\, I_i=0\}|\leqslant s\}$~---~these layers are in all but, at most, $s$ of the stacks that contribute to $\mathbb P(E)$. There are hierarchies of $r^*$ generalizations, such as $r_{E,s}^{\max}({\boldsymbol p}):=\max_{{\boldsymbol y}^*\in\mathcal Y(E,\boldsymbol p)}|D^s({\boldsymbol y}^*)|$ and $r_{E,s}^{\min}({\boldsymbol p}):=\min_{{\boldsymbol y}^*\in\mathcal Y(E,\boldsymbol p)}|D^s({\boldsymbol y}^*)|$. In particular, the largest number of dominating $A_i$ layers at optimality (i.e. layers present in all stacks that contribute to $\mathbb P(E)$) is $r_{E,0}^{\max}({\boldsymbol p})$. By definition, $D^0({\boldsymbol y}^*)\subseteq D^1({\boldsymbol y}^*)\subseteq\ldots $ and $r_{E,0}^{\max}({\boldsymbol p})\leqslant r_{E,1}^{\max}({\boldsymbol p})\leqslant\ldots$, which suggests a hierarchy of optimal configuration/distribution ``Types'' classified by $s$.

\paragraph{\textbf{More examples.}}
\label{sec_more_examples}
We have focused on bounding ``out--of--$n$'' probabilities. Optimal policies of Theorem\ \ref{thm:bellmanconvergence} give constructions of bounds on more general compound event probabilities, as illustrated by the following example. 
Suppose there are $k$ sets of distinct events, where the $i$-th set contains $n_i$ many events. $A_{ij}$ is the $j$-th event in the $i$-th set, and  
$0<p_{ij}<1$ is its probability. Further assume, \emph{w.l.o.g.}, that $p_{i1}<p_{i2}<\ldots<p_{i,n_i}$ for each set $i$, and that $p_{1 n_1}>p_{2 n_2}>\ldots>p_{k n_k}$. What is the largest probability of all the ``$A$'' events from one set, and none of the $A$ events from the other $k-1$ sets, occurring?   
\begin{theorem}
\label{thrm_nonoverlapstacksproblem}
With the $A_{ij}$ events and $p_{ij}$ probabilities as defined,
\begin{align}
\label{eqn_nonoverlapstacksproblem}
&\sup_{\mathbb{P}}\mathbb{P}(\text{all the ``}A\text{'' events from one set, and none from the other sets, occur})\nonumber \\
\text{s.t.}\,&\,\,\mathbb{P}(A_{ij})=p_{ij}\ \text{ for all }i=1,...,k \text{ and }j=1,...,n_i
\end{align}
has the solution
\begin{align}
\label{eqn_soln_nonoverlapstacksproblem}
\sum_{i=1}^{r_{t\ast}} \min\{p_{in_i} - t^\ast,\,p_{i1}\}
\end{align}
where 

$$t^\ast:=\min\!\left\{\,0\leqslant t\leqslant p_{1n_1}\,\,\bigg\vert\,\,\sum_{i=1}^{r_t} \min\{p_{in_i} - t, p_{i1}\}\leqslant 1-\max\{t,\, p^\ast_{1},\ldots,p^\ast_{r_t}\}\right\}\!,$$
with $\,\,r_{t}:=\max\!\big\{\,r\in\{1,\ldots, k\}\,\,\big\vert\,\, p_{r n_r}>t\big\}\,\,$ and $\,\,\ p^\ast_i:=p_{i n_i}-p_{i1}$.
\end{theorem}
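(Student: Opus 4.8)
The plan is to reduce the supremum to a small but \emph{coupled} linear program in $k$ scalar unknowns, read off the extremal configuration from the threshold $t^\ast$, and close the argument with a matching upper bound obtained by LP duality. First I would exploit disjointness: the $k$ events ``all of set $i$, none of the others'' are pairwise mutually exclusive (two of them would force some set to both occur fully and to not occur at all), so the objective in \eqref{eqn_nonoverlapstacksproblem} equals $\sum_{i=1}^k s_i$, where $s_i:=\mathbb P(\text{all of set }i,\text{ none of the others})$. In the stack picture of Lemma~\ref{lem:statewise-Phi}, $s_i$ is the length of the region $S_i\subseteq[0,1]$ carrying all $n_i$ layers of set $i$ and no layer of any other set; the $S_i$ are disjoint, and the leftover $T:=[0,1]\setminus\bigcup_i S_i$ has length $1-\sum_j s_j$.

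Next I would record the constraints that any feasible distribution forces on $(s_1,\dots,s_k)$. Since $S_i\subseteq A_{i1}$ we obtain \textbf{(a)} $s_i\leqslant p_{i1}$. For the decisive constraint \textbf{(b)}, note that the largest layer $A_{in_i}$ is present throughout $S_i$ but absent on every $S_{i'}$ with $i'\neq i$ (those regions exclude all of set $i$); hence its remaining length $p_{in_i}-s_i$ must sit inside $T$, giving $p_{in_i}-s_i\leqslant 1-\sum_j s_j$. By Lemma~\ref{lem:statewise-Phi} and Theorem~\ref{thm:bellmanconvergence} the supremum equals the value of the linear program $\max\{\sum_i s_i:\ 0\leqslant s_i\leqslant p_{i1},\ p_{in_i}-s_i\leqslant 1-\sum_j s_j\ \text{for all }i\}$. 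The essential feature to keep in mind is that the leftover length $1-\sum_j s_j$ enters \emph{every} constraint, so the program is not separable across sets.

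For achievability I would take $t=t^\ast$ and set $s_i=\min\{p_{in_i}-t^\ast,p_{i1}\}$ for $i\leqslant r_{t^\ast}$ and $s_i=0$ otherwise, so that $\sum_i s_i$ is exactly \eqref{eqn_soln_nonoverlapstacksproblem}. Constraint (a) is immediate; for (b) observe that $p_{in_i}-s_i=\max\{t^\ast,p^\ast_i\}$ on active sets while $p_{in_i}\leqslant t^\ast$ on inactive ones, so the whole family of (b) constraints collapses to $\sum_{i\leqslant r_{t^\ast}}\min\{p_{in_i}-t^\ast,p_{i1}\}\leqslant 1-\max\{t^\ast,p^\ast_1,\dots,p^\ast_{r_{t^\ast}}\}$, which is precisely the inequality that $t^\ast$ satisfies by definition. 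I would then realise these $s_i$ by a genuine distribution: place disjoint regions $S_i$ of length $s_i$, cover each by all of set $i$'s layers, and push the remainder $p_{ij}-s_i$ of every layer into $T$; each remainder has length at most $\max\{t^\ast,p^\ast_i\}\leqslant|T|$ (for inactive sets at most $p_{in_i}\leqslant t^\ast\leqslant|T|$), so it fits. The marginals are reproduced and $\bigcup_iS_i\subseteq E$, giving a feasible distribution whose value is \eqref{eqn_soln_nonoverlapstacksproblem}; hence the supremum is at least \eqref{eqn_soln_nonoverlapstacksproblem}.

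The remaining, and hardest, step is the matching upper bound: every feasible $(s_i)$ has $\sum_i s_i\leqslant$ \eqref{eqn_soln_nonoverlapstacksproblem}. My plan is to parametrise candidate solutions by $t$, writing $\phi(t):=\sum_{i\leqslant r_t}\min\{p_{in_i}-t,p_{i1}\}$, and to check that $\phi$ is continuous and nonincreasing and that the admissible set $\{t:\phi(t)+\max\{t,p^\ast_1,\dots,p^\ast_{r_t}\}\leqslant 1\}$ has minimum $t^\ast$, so that the best threshold configuration has value $\phi(t^\ast)=$ \eqref{eqn_soln_nonoverlapstacksproblem}. To show that \emph{no} feasible point beats $\phi(t^\ast)$, I would dualise: bound $\sum_i s_i$ by a nonnegative combination of (a) and (b), charging the ``full'' sets ($p^\ast_i\geqslant t^\ast$, at their bound $p_{i1}$) through (a) and the active ``partial'' sets through their (b)-constraints $\sum_{j\neq i}s_j\leqslant 1-p_{in_i}$ (e.g. in the two-set case $\sum_j s_j\leqslant(1-p_{1n_1})+(1-p_{2n_2})$ already gives the bound). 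I expect the construction of this dual combination---equivalently, identifying exactly which of (a) and (b) are tight at optimality via the full/partial/inactive classification induced by $t^\ast$, and certifying that a vertex of this form is globally optimal for the coupled program---to be the main obstacle, together with the boundary case $t^\ast=0$, where \eqref{eqn_soln_nonoverlapstacksproblem} reduces to $\sum_i p_{i1}$ and the upper bound follows by merely summing (a). Combining the upper bound with the explicit construction then pins the supremum at \eqref{eqn_soln_nonoverlapstacksproblem}.
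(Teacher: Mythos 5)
Your route is genuinely different from the paper's. The paper argues geometrically: it first reduces each set $i$ to its two extreme layers $A_{i1}$ and $A_{in_i}$, then sweeps a vertical line $t$ leftward from $p_{1n_1}$, peeling non-overlapping contributing stacks off to the right until the stopping condition defining $t^\ast$ first holds, and finally defends global optimality by a reversal/contradiction argument. You instead project onto the $k$ scalars $s_i=\mathbb P(\text{all of set }i,\text{ none of the others})$ and isolate the constraints $s_i\leqslant p_{i1}$ and $\sum_{j\neq i}s_j\leqslant 1-p_{in_i}$; since any $(s_i)$ satisfying these is realizable by exactly the layered construction you describe (each $A_{ij}$ covers $S_i$ and spills $p_{ij}-s_i\leqslant p_{in_i}-s_i\leqslant |T|$ into $T$), the supremum is the value of this $k$-variable coupled LP. That reduction, and your verification that $s_i=\min\{p_{in_i}-t^\ast,p_{i1}\}$ for $i\leqslant r_{t^\ast}$ (zero otherwise) is feasible because $p_{in_i}-s_i=\max\{t^\ast,p_i^\ast\}$ collapses all the coupling constraints into precisely the inequality defining $t^\ast$, are correct and in my view cleaner than the paper's sweep. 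One caution: Lemma~\ref{lem:statewise-Phi} gives an LP in $2^n$ variables, so the identification of the supremum with your $k$-variable program rests on the realizability construction you supply, not on that citation.

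The genuine gap is the half you yourself flag as open: you never prove that no feasible point of the coupled LP exceeds \eqref{eqn_soln_nonoverlapstacksproblem}. ``I would dualise'' and ``I expect the construction of this dual combination to be the main obstacle'' is a plan, not a proof, and without it you have only the lower bound. The step is closable, and more directly than by duality: set $u:=1-\sum_i s_i$, note that feasibility forces $p_{in_i}-u\leqslant s_i\leqslant p_{i1}$, hence $u\geqslant p_i^\ast$ for every $i$ and $\sum_i\max\{0,p_{in_i}-u\}\leqslant 1-u$; maximizing $\sum_i s_i$ is then minimizing $u$ over this admissible set (subject also to $\sum_i s_i\leqslant\sum_i p_{i1}$), and what remains is to identify the capped value $\min\{1-u_{\min},\sum_i p_{i1}\}$ with $\phi(t^\ast):=\sum_{i\leqslant r_{t^\ast}}\min\{p_{in_i}-t^\ast,p_{i1}\}$. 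This identification is not a triviality: the optimal leftover $u_{\min}$ equals $\max\{t^\ast,p^\ast_1,\ldots,p^\ast_{r_{t^\ast}}\}$ rather than $t^\ast$ itself (e.g.\ $k=2$, $p_{11}=0.3$, $p_{12}=0.9$, $p_{21}=0.5$ gives $t^\ast=0.4$ but leftover $0.6$), and the case where $t^\ast=0$ with the defining inequality strict must be handled separately via $\sum_i s_i\leqslant\sum_i p_{i1}=\phi(0)$. Until this threshold (or an explicit dual certificate) argument is written out, the proposal does not establish the stated value.
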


\begin{proof}
Existence: There exist distributions that satisfy the constraints in \eqref{eqn_nonoverlapstacksproblem} (e.g. stack all of the ``$A$'' events' layers over the unit interval). Suppose $\mathbb{P}$ satisfies the constraints. Consider the stack that represents the probability of only all of the events in set $i$ occurring, $\mathbb{P}(\cap_{j=1}^{n_i}A_{ij}\cap_{l\ne i}^k\cap_{m=1}^{n_l}A^c_{lm})$. The relevant ``$A$'' event layers that contribute to this stack have lengths between $p_{i1}$ and $p_{in_i}$ --- this means, in order to maximize this stack's horizontal length, one can arrange these layers so that they dominate the ``$A_{i1}$'' layer, and they are dominated by the ``$A_{in_i}$'' layer. Consequently, \emph{w.l.o.g.}, there is no need to explicitly work with any layers in this stack except the ``$A_{i1}$'' and ``$A_{i n_i}$'' layers.

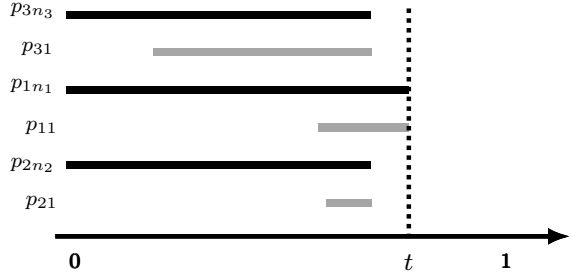
\begin{figure}[!t]
    \centering
	\begin{tikzpicture}
            \begin{scope}
                \coordinate (r1lu) at (2.6,-1.85);
                \coordinate (r1ld) at (2.6,-1.75);
                \coordinate (r1rd) at (3.2,-1.75);
                \coordinate (r1ru) at (3.2,-1.85);
                \fill[fill=gray!70] (r1lu) -- (r1ld) -- (r1rd) -- (r1ru) -- cycle;

                \coordinate (r4lu) at (-0.85,-0.25);
                \coordinate (r4ld) at (-0.85,-0.35);
                \coordinate (r4rd) at (3.7,-0.35);
                \coordinate (r4ru) at (3.7,-0.25);
                \fill[fill=black] (r4lu) -- (r4ld) -- (r4rd) -- (r4ru) -- cycle;

                \coordinate (r6lu) at (0.3,0.25);
                \coordinate (r6ld) at (0.3,0.15);
                \coordinate (r6rd) at (3.2,0.15);
                \coordinate (r6ru) at (3.2,0.25);
                \fill[fill=gray!70] (r6lu) -- (r6ld) -- (r6rd) -- (r6ru) -- cycle;

                \coordinate (r5lu) at (-0.85,0.75);
                \coordinate (r5ld) at (-0.85,0.65);
                \coordinate (r5rd) at (3.2,0.65);
                \coordinate (r5ru) at (3.2,0.75);
                \fill[fill=black] (r5lu) -- (r5ld) -- (r5rd) -- (r5ru) -- cycle;


                \coordinate (r3lu) at (-0.85,-1.35);
                \coordinate (r3ld) at (-0.85,-1.25);
                \coordinate (r3rd) at (3.2,-1.25);
                \coordinate (r3ru) at (3.2,-1.35);
                \fill[fill=black] (r3lu) -- (r3ld) -- (r3rd) -- (r3ru) -- cycle;




                
                \coordinate (r3lu) at (2.5,-0.86);
                \coordinate (r3ld) at (2.5,-0.745);
                \coordinate (r3rd) at (3.7,-0.745);
                \coordinate (r3ru) at (3.7,-0.86);
                \fill[fill=gray!70] (r3lu) -- (r3ld) -- (r3rd) -- (r3ru) -- cycle;


                



                





                    
                \draw[-{Latex[length=3mm]},line width=0.06cm] (-1,-2.25) -- (5.85,-2.25); 
                \node[anchor=north,scale=1] (x_axis_label) at (5,-2.37) {$\pmb{1}$}; 

                \node[anchor=north east,scale=1] (x_axis_label) at (-0.55,-2.37) {$\pmb{0}$}; 

                \draw[-{Latex[length=3mm]},line width=0.06cm] (-1,-2.25) -- (5.85,-2.25); 

                \draw[-,line width=0.06cm,dotted] (3.7,-2.275)--(3.7,0.8);
                \node[scale=1.2] (p001_lab) at (3.7,-2.6) {$t$}; 






                
                \node[anchor=west,scale=1] (p21_lab) at (-1.5,-1.8) {$p_{21}$}; 
                \node[anchor=west,scale=1] (p1n1_lab) at (-1.7,-0.25) {$p_{1n_1}$}; 
                \node[anchor=west,scale=1] (p11_lab) at (-1.5,-0.8) {$p_{11}$}; 
                \node[anchor=west,scale=1] (p2n2_lab) at (-1.7,-1.3) {$p_{2n_2}$}; 
                \node[anchor=west,scale=1] (p31_lab) at (-1.55,0.25) {$p_{31}$}; 
                \node[anchor=west,scale=1] (p3n3_lab) at (-1.7,0.75) {$p_{3n_3}$}; 

            \end{scope}    
        \end{tikzpicture}
    \caption{\textit{Only the ``$A_{i1}$'' (gray) and ``$A_{in_i}$'' (black) layers are represented. Initially, the vertical $t$-line is located at $t=p_{1n_1}$. As illustrated, $p^\ast_{i}$ is the length of the black ``$A_{in_i}$'' layer that does not overlap with the gray ``$A_{i1}$'' layer.}}
	\label{fig_supexclusivecmpndevents_step1}
\end{figure}
\begin{figure}[!t]
    \centering
	\begin{tikzpicture}
            \begin{scope}
                \coordinate (r1lu) at (4.3,-1.85);
                \coordinate (r1ld) at (4.3,-1.75);
                \coordinate (r1rd) at (4.9,-1.75);
                \coordinate (r1ru) at (4.9,-1.85);
                \fill[fill=gray!70] (r1lu) -- (r1ld) -- (r1rd) -- (r1ru) -- cycle;

                \coordinate (r8lu) at (4.3,-1.35);
                \coordinate (r8ld) at (4.3,-1.25);
                \coordinate (r8rd) at (4.9,-1.25);
                \coordinate (r8ru) at (4.9,-1.35);
                \fill[fill=black] (r8lu) -- (r8ld) -- (r8rd) -- (r8ru) -- cycle;                

                \coordinate (r4lu) at (-0.85,-0.25);
                \coordinate (r4ld) at (-0.85,-0.35);
                \coordinate (r4rd) at (2.6,-0.35);
                \coordinate (r4ru) at (2.6,-0.25);
                \fill[fill=black] (r4lu) -- (r4ld) -- (r4rd) -- (r4ru) -- cycle;

                \coordinate (r6lu) at (0.3,0.25);
                \coordinate (r6ld) at (0.3,0.15);
                \coordinate (r6rd) at (2.6,0.15);
                \coordinate (r6ru) at (2.6,0.25);
                \fill[fill=gray!70] (r6lu) -- (r6ld) -- (r6rd) -- (r6ru) -- cycle;         

                \coordinate (r9lu) at (3.7,0.75);
                \coordinate (r9ld) at (3.7,0.65);
                \coordinate (r9rd) at (4.3,0.65);
                \coordinate (r9ru) at (4.3,0.75);
                \fill[fill=black] (r9lu) -- (r9ld) -- (r9rd) -- (r9ru) -- cycle;
                
                \coordinate (r7lu) at (3.7,0.25);
                \coordinate (r7ld) at (3.7,0.15);
                \coordinate (r7rd) at (4.3,0.15);
                \coordinate (r7ru) at (4.3,0.25);
                \fill[fill=gray!70] (r7lu) -- (r7ld) -- (r7rd) -- (r7ru) -- cycle;

                \coordinate (r5lu) at (-0.85,0.75);
                \coordinate (r5ld) at (-0.85,0.65);
                \coordinate (r5rd) at (2.6,0.65);
                \coordinate (r5ru) at (2.6,0.75);
                \fill[fill=black] (r5lu) -- (r5ld) -- (r5rd) -- (r5ru) -- cycle;


                \coordinate (r3lu) at (-0.85,-1.35);
                \coordinate (r3ld) at (-0.85,-1.25);
                \coordinate (r3rd) at (2.6,-1.25);
                \coordinate (r3ru) at (2.6,-1.35);
                \fill[fill=black] (r3lu) -- (r3ld) -- (r3rd) -- (r3ru) -- cycle;




                
                \coordinate (r3lu) at (2.6,-0.86);
                \coordinate (r3ld) at (2.6,-0.745);
                \coordinate (r3rd) at (3.7,-0.745);
                \coordinate (r3ru) at (3.7,-0.86);
                \fill[fill=gray!70] (r3lu) -- (r3ld) -- (r3rd) -- (r3ru) -- cycle;

                \coordinate (r4lu) at (2.6,-0.25);
                \coordinate (r4ld) at (2.6,-0.35);
                \coordinate (r4rd) at (3.7,-0.35);
                \coordinate (r4ru) at (3.7,-0.25);
                \fill[fill=black] (r4lu) -- (r4ld) -- (r4rd) -- (r4ru) -- cycle;                


                



                





                    
                \draw[-{Latex[length=3mm]},line width=0.06cm] (-1,-2.25) -- (5.85,-2.25); 
                \node[anchor=north,scale=1] (x_axis_label) at (5,-2.37) {$\pmb{1}$}; 

                \node[anchor=north east,scale=1] (x_axis_label) at (-0.55,-2.37) {$\pmb{0}$}; 

                \draw[-{Latex[length=3mm]},line width=0.06cm] (-1,-2.25) -- (5.85,-2.25); 

                \draw[-,line width=0.06cm,dotted] (2.6,-2.275)--(2.6,0.8);
                \node[scale=1.2] (p001_lab) at (2.6,-2.6) {$t^\ast$}; 






                
                \node[anchor=west,scale=1] (p21_lab) at (-1.5,-1.8) {$p_{21}$}; 
                \node[anchor=west,scale=1] (p1n1_lab) at (-1.7,-0.25) {$p_{1n_1}$}; 
                \node[anchor=west,scale=1] (p11_lab) at (-1.5,-0.8) {$p_{11}$}; 
                \node[anchor=west,scale=1] (p2n2_lab) at (-1.7,-1.3) {$p_{2n_2}$}; 
                \node[anchor=west,scale=1] (p31_lab) at (-1.55,0.25) {$p_{31}$}; 
                \node[anchor=west,scale=1] (p3n3_lab) at (-1.7,0.75) {$p_{3n_3}$}; 

            \end{scope}    
        \end{tikzpicture}
    \caption{\textit{Eventually, $t$ cannot be reduced further: here, stacks to the right of $t^\ast$ sum up to $1-t^\ast$, and the overlapping of all black stacks to the left of $t^\ast$ preclude any stacks to the left of $t^\ast$ that contribute to the objective function. This illustration also suggests $t^\ast=p^\ast_1=p^\ast_2$ with $r_{t^\ast}=3$. Since $t^\ast> p^\ast_3$, the solution to \eqref{eqn_nonoverlapstacksproblem} is $\sum_{i=1}^{3} (p_{in_i}-t^\ast)=1-t^\ast=1-p^\ast_1=1-p^\ast_2$. Distributions that attain such bounds are unique up to reshuffling the stack locations.}}
	\label{fig_supexclusivecmpndevents_step2}
\end{figure}
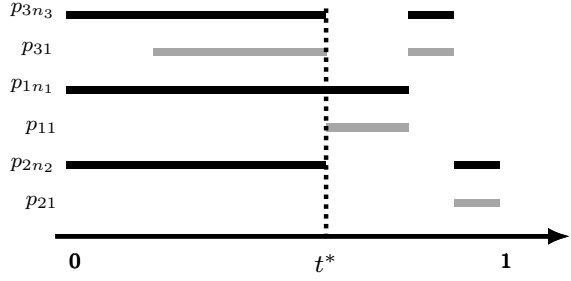
Starting from any feasible distribution $\mathbb{P}$ that satisfies the constraints in \eqref{eqn_nonoverlapstacksproblem}, we may rearrange only those portions of layers that do not contribute to the objective function. Such rearrangements preserve the constraints and cannot reduce the objective function value (and if the objective value increases, we simply take the new $\mathbb{P}$ as our starting point). By repeatedly applying these rearrangements we obtain a feasible distribution $\tilde{\mathbb{P}}$, with objective value at least that of $\mathbb{P}$, in which all portions of layers that do not contribute to the objective function overlap as much as possible, and all portions that do contribute belong to stacks that do not overlap with other stacks. Thus, \emph{w.l.o.g.}, we may restrict attention to configurations of this canonical form. This suggests constructing a local maximum as follows. First, overlap all layers, so that ``$A_{in_i}$'' layers are aligned with $0$ on the left, and ``$A_{i1}$'' layers are aligned with their respective ``$A_{in_i}$'' layers on the right (e.g. Fig.\ \ref{fig_supexclusivecmpndevents_step1}). Define a vertical line (representing the $t$ variable) that is initially located at $t=p_{1 n_1}$. Reduce $t$ continuously by moving this vertical line leftward. As $t$ decreases, rearrange the portions of ``$A_{i1}$'' and ``$A_{in_i}$'' layers that lie to the right of the $t$ line, ensuring that they form non-overlapping stacks that contribute to the objective function. Keep decreasing $t$ until either these non-overlapping stacks sum to $1-\max\{t,\, p^\ast_{1},\ldots,p^\ast_{r_t}\}$, or sum to $\sum_{i=1}^{k} p_{i1}$, whichever happens first (e.g. Fig.\ \ref{fig_supexclusivecmpndevents_step2}). 

Uniqueness: The global uniqueness of this maximum can be shown via a contradiction. \emph{W.l.o.g.}, assume $t^\ast$ has the value $t^{\ast\ast}$, and assume there is some configuration of stacks that gives a larger objective function value than that determined by $t^{**}$. Rearrange the locations of stacks in this configuration, so that all stacks that do not contribute to the objective function are located over the unit interval on the left, while the remaining stacks are located to the right of all of these non-contributing stacks. Ensure that the portions of ``$A_{in_i}$'' layers in the non-contributing stacks are aligned at $0$, and the ``$A_{i1}$'' portions in these stacks are aligned with their respective non-contributing ``$A_{in_i}$'' portions on the right\footnote{Since these are non-contributing portions, rearranging them cannot reduce the objective function value. If the objective function value is increased, then consider this new value instead.}. Among all ``$A_{in_i}$'' layers with contributing portions, there is some layer with the smallest non-contributing portion. Let the length of this smallest portion be $l$. Note that $l\ne t^{\ast\ast}$, since $t^{\ast\ast}$ determines a smaller objective function value than that given by the initial configuration that determines $l$. Set $t=l$ and reverse the process in the previous paragraph\footnote{This entails ensuring that if an ``$A_{in_i}$'' or ``$A_{i1}$'' layer touches the $t$ line from the left, it will continue to do so until there is none of the layer to the right of $t$.}. Continue to increase $t$, reassigning layers in contributing stacks to non-contributing stacks, until there are no more contributing stacks; i.e. until $t=p_{1n_1}$. By symmetry and the definition of $t^\ast$ in Theorem \ref{thrm_nonoverlapstacksproblem}, increasing $t$ from $l$ to $p_{1n_1}$ shows that reducing $t$ from $p_{1n_1}$ gives $t^\ast=l$. We appear to have contradicting values for $t^\ast$, namely $t^\ast=l\ne t^{\ast\ast}=t^\ast$.
\end{proof}

\noindent\textbf{Remarks.} Theorem\ \ref{thrm_nonoverlapstacksproblem} solves $\sup_{\mathbb{P}}\mathbb{P}(\cup_{i=1}^{k}\cap_{j=1}^{n_i}A_{ij}\cap_{l\ne i}^k\cap_{m=1}^{n_{l}} A^c_{lm})$. The analogous result for $\inf_{\mathbb{P}} \mathbb{P}(\cap_{i=1}^{k}\cup_{j=1}^{n_i}A_{ij}\cup_{l\ne i}^k\cup_{m=1}^{n_{l}} A^c_{lm})$ is derived from Theorem\ \ref{thrm_nonoverlapstacksproblem} by using complementary events and complementary probabilities. Moreover, from the symmetry between complementary stacks, one deduces $\sup_{\mathbb{P}} \mathbb{P}(\cup_{i=1}^{k}\cap_{j=1}^{n_i}A^c_{ij}\cap_{l\ne i}^k\cap_{m=1}^{n_{l}} A_{lm})$ and $\inf_{\mathbb{P}} \mathbb{P}(\cap_{i=1}^{k}\cup_{j=1}^{n_i}A^c_{ij}\cup_{l\ne i}^k\cup_{m=1}^{n_{l}} A_{lm})$.

The related, weaker bound, $$\sup_{\mathbb{P}}\mathbb{P}(\text{\emph{all the ``}}A\text{\emph{'' events from one of the }}k\text{\emph{ sets occur}})$$ (i.e. $\sup_{\mathbb{P}}\mathbb{P}(\cup_{i=1}^k\cap_{j=1}^{n_i}A_{ij})$), is equal to the Theorem\ \ref{thrm_nonoverlapstacksproblem} bound when $t^\ast=0$; then, all of the ``$A_{i1}$'' layers belong to stacks that contribute to the objective function. More generally, this weaker bound follows immediately from $\sup_{\mathbb{P}}\mathbb{P}(\text{at least }1\text{--out--of--}k)$ via Theorem\ \ref{thrm_sup}, using only the $k$ ``$p_{i1}$'' probabilities: the bound is $\min\{\sum_{i=1}^k p_{i1}, 1\}$. Intuitively, the contributing stacks are ``spread out'' as much as they can be over the unit interval without overlapping, unless overlapping portions of ``$A_{i1}$'' layers in these stacks is unavoidable.

\section*{Acknowledgments.}
This work was partly inspired by an application of Boole--Fr\'{e}chet bounds in software reliability assessment, due to  Prof. Lorenzo Strigini, Dr. Andrey Povyakalo, and Dr. David Wright. 

\bibliographystyle{vancouver}
\bibliography{refs.bib}

\end{document}